
\documentclass[11pt]{amsproc}
\usepackage{amscd}

\setcounter{MaxMatrixCols}{10}

\theoremstyle{plain}

\newtheorem{corollary}{Corollary}

\newtheorem{definition}{Definition}
\newtheorem{example}{Example}

\newtheorem{lemma}{Lemma}

\newtheorem{remark}{Remark}

\newtheorem{theorem}{Theorem}
\numberwithin{equation}{section}

\input{tcilatex}

\begin{document}
\title[On the Morse index ...]{On the Morse index of harmonic maps and
minimal immersions}
\author{Mohammed Benalili}
\address{University AbouBakr Belkaid\\
Faculty of Sciences \\
Dept. of Mathematics\\
B.P. 119\\
Tlemcen Algeria}
\email{m\_benalili@mail.univ-tlemcen.dz}
\author{Hafida Benallal}
\email{hafedabenallal@yahoo.fr}

\begin{abstract}
In this paper we are concerned with harmonic maps and minimal immersions
defined on compact Riemannian manifolds and with values in homogenous
strongly harmonic manifolds. We show some results on the Morse index by
varying these maps along suitable conformal vector fields. We obtain also
that they are global maxima on some subspaces of the eigenspaces
corresponding to the nonvanishing eigenvalues of the Laplacian operator on
the target manifolds.
\end{abstract}

\maketitle

\footnote{%
AMSC 2000: 58E20, 53C42.
\par
Keywords: harmonic map, minimal immersion, Morse index.}

\section{Introduction}

Let $(M$ $^{m},g)\;$and $(N^{n},h)$ be Riemannian manifolds of dimension $m$
and $n$, respectively. If we use local coordinates, the metric tensors of $%
M^{m}$ and $N^{n}$ will be written as

\begin{equation*}
(g_{\alpha \beta })_{\alpha ,\beta =1,...m}\text{,}
\end{equation*}%
and

\begin{equation*}
(h_{ij})_{i,j=1,...,n}\text{.}
\end{equation*}%
The inverse metric tensor is 
\begin{equation*}
(g^{\alpha \beta })_{\alpha \beta =1,...m,}=(g_{\alpha \beta
})^{-1}\;_{\alpha ,\beta =1,...m,}
\end{equation*}%
and 
\begin{equation*}
\left\vert g\right\vert =\det (g_{\alpha \beta }).
\end{equation*}

If \ $f:M^{m}\rightarrow N^{n}$ is a map of class $C^{1}$, its energy
density is given by

\begin{equation}
e(f)(x)=\frac{1}{2}g^{\alpha \beta }(x)h_{ij}(f(x))\frac{\partial f_{i}(x)}{%
\partial x_{\alpha }}\frac{\partial f_{j}(x)}{\partial x_{\beta }}  \label{1}
\end{equation}%
in local coordinates $(x_{1},..,x_{m})$ on $M^{m}$ and $(y_{1},...,y_{n})$
on $N^{n}$ where the Einstein convention summation is used$.$ Or as an
intrinsic quantity of Riemannian geometry of $M^{m}$ and $N^{n}$

\begin{equation*}
e(f)=\frac{1}{2}\left\langle df,df\right\rangle _{T^{\ast }M^{m}\otimes
f^{-1}TN^{n}}
\end{equation*}%
where $\left\langle .,.\right\rangle _{f^{-1}TN^{n}}$ is the pullback by $f$
of the metric tensor of $N^{n}$.

Then the energy of $f$ \ is simply

\begin{equation}
E(f)=\int_{M^{m}}e(f)dv_{g}  \label{2}
\end{equation}%
with $dv_{g}=\sqrt{\left\vert g\right\vert }dx_{1}...dx_{m}$ \ the volume
element of $M^{m}$ \ in local coordinates.

Let $w$\ be a vector field along $f$ \ that means that $w$ \ is a section of 
$f^{-1}TN^{n}$ the pullback of the tangent space $TN^{n}$ by the map $f.\; $%
In local coordinates

\begin{equation*}
w=w^{i}(x)\frac{\partial }{\partial y_{i}}
\end{equation*}%
$w$ induces a variation of $f$ \ given by

\begin{equation*}
f_{t}(x)=\exp (tw)of(x).
\end{equation*}%
Put 
\begin{equation*}
\phi _{t}^{w}=\exp (tw)
\end{equation*}%
and 
\begin{equation}
\psi =\phi _{t_{o}}^{w}of\text{.}  \label{2'}
\end{equation}%
We get the first variation formula for the energy functional 
\begin{equation}
\frac{d}{dt}E(f_{t})\mid _{t=t_{0}}=-\int_{M^{m}}\left\langle \text{trace}%
\nabla d\psi ,wo\psi \right\rangle _{f^{-1}TN^{n}}dv_{g}.  \label{3}
\end{equation}%
$\tau _{g}(\psi )=$trace$\nabla d\psi $ is the so called tension field of $%
\psi $, where $\nabla $ denotes the covariant derivative on the manifold $%
N^{n}$.

\begin{definition}
The map $f$ is called harmonic if and only if the tensor field $\tau (f)=0.$
i.e. $f$ is a critical point of the energy functional $E$.
\end{definition}

\begin{definition}
The volume of an immersion $f,$ from a Riemannian manifold $(M^{m},g)$ into $%
(N^{n},h)$, denoted by $V(f)$, is defined as the Riemannian volume of $M^{m}$
endowed with the Riemannian metric $f^{\ast }h$, the pull back of the metric 
$h$.
\end{definition}

\begin{definition}
The map $f$ \ will be said minimal immersion if it satisfies 
\begin{equation*}
\frac{d}{dt}V(f_{t})\mid _{t=0}=0
\end{equation*}%
for any vector field $w$ on $N^{n}$ along $\;f.$
\end{definition}

Many results on the Morse index of harmonic maps and minimal immersions from
a compact Riemannian manifolds into the Euclidean sphere have been obtained
by varying the energy functional along conformal vector fields on the
Euclidean sphere. For a survey on the theory of harmonic maps and minimal
immersions, we refer the reader to ( \cite{1}, \cite{4}, \cite{9}).

In (\cite{5}), A. El Soufi shows\textit{\ that for every harmonic map }$f$%
\textit{\ from an }$m$\textit{- dimensional\ compact Riemannian manifold (}$%
M,g$\textit{) into the Euclidean sphere }$S^{n}$\textit{\ which enjoys one
of the following properties:}

\textit{(i) The stress-energy tensor }$S_{g}^{o}(f)$\textit{\ is positive
everywhere and positive definite at least at one point in }$M$

\textit{(ii) }$S_{g}^{o}(f)$\textit{\ is positive everywhere on }$M$\textit{%
, }$f$\textit{\ is an immersion and }$f(M)$\textit{\ is not a totally
geodesic sphere of }$S^{n}$\textit{.}

\textit{Then the Morse index of }$f$\textit{, }$\ \ \ Ind_{E}(f$\textit{\ }$%
)\geq n+1$.

Also he proves

\textit{Let }$f$\textit{\ be a minimal immersion from a compact }$m-$\textit{%
dimensional manifold }$M^{m}$\textit{\ into the sphere }$S^{n}\;(n\geq 3).$%
\textit{\ Then two possibilities hold:}

\textit{(i) }$Ind_{V}(f)=n-m$\textit{\ , hence }$\phi (M^{m})$\textit{\ is a
totally geodesic sphere of }$S^{n}$

\textit{(ii) }$Ind_{V}\left( f\right) \geq n+1.$\textit{\ Where }$%
Ind_{V}\left( f\right) $\textit{\ stands for the Morse index of }$f$.

In this paper we extend some results on the index of harmonic maps and
minimal immersions obtained in (\cite{5}) to non necessary spherical cases.
Mainly, if $L$ denotes an appropriate $n+1$- dimensional subspace of the
eigenspace $V_{\lambda }$ corresponding to the nonvanishing eigenvalue $%
\lambda $ of the Laplacian operator on the target manifold $N^{n}$ and $%
L^{\perp }$ is the normal component of $L$, we show that the Morse index, $%
Ind_{E}(f)$, of a harmonic map $f$ defined on a compact Riemannian manifold $%
M^{m}$ and with values in homogenous strongly harmonic manifolds $N^{n}$
fulfills $Ind_{E}(f)\geq \dim L$ provided that the sectional curvature $K$
of the target manifold $N^{n}$ satisfies $K\geq \kappa >0$, where $\kappa $
is a constant, the stress-energy tensor $S_{g}^{o}(f)$ is positive definite
and $\lambda \leq \frac{n^{2}}{2}\kappa $. We also obtain that if $%
f:M^{m}\rightarrow N^{n}$ is a minimal isometric immersion not totally
geodesic from a compact Riemannian manifold into a homogenous strongly
harmonic Riemannian manifold of dimension $n\geq 3$\ with sectional
curvature $K$ satisfying $K\geq \kappa >0$, where $\kappa $ is a constant,
then the Morse index, $Ind_{V}(f)\geq \dim (L{}^{\bot })$ provided that $%
\lambda $ satisfies $\lambda \leq \frac{n^{2}}{2}\kappa $.\ Finally we prove
that harmonic maps whose source manifolds are compact and the target ones
are homogenous strongly harmonic are global maximum on the appropriate $%
(n+1) $-dimensional subspace $L$ of the eigenspace $V_{\lambda }$ provided
that the stress-energy tensor $S_{g}^{o}(f)$ is positive and isometric
minimal immersions from compact manifolds into homogenous strongly harmonic
ones are global maximum on the $L^{\perp }$.

\section{Conformal vector fields on strongly harmonic manifolds}

In this section we will construct conformal vector fields on strong harmonic
manifolds along which we will vary the energy and the volume functionals.

\subsection{\protect\bigskip Strongly harmonic manifolds}

In this section, we will construct conformal vector fields by mean of
gradients of the eigenfunctions to Laplacian operator on strongly harmonic
manifolds.

\begin{definition}
A compact Riemannian manifold ($N^{n},h$) is said to be strongly harmonic(
we shall say $SH$- manifold) if there exists a map $\Xi :$ $R_{+}\times
R_{+}^{\ast }\rightarrow R$ with the property that the fundamental solution
of the heat equation $K$ on $(N^{n},h)$ can be written as $K(x,y,t)=\Xi
(\rho (x,y),t)$ for every $x$ and $y$ in $N^{n}$, $t$ in $R_{+}^{\ast }$ and 
$\rho $ is the distance function on $N^{n}$.
\end{definition}

For an eigenvalue $\lambda _{\alpha }$ of the Laplacian $\Delta =-div(\nabla
_{h})$ on $(N^{n},h)$, set $V_{\alpha }=\left\{ f:\Delta f=\lambda _{\alpha
}f\right\} $ the eigenspace corresponding to $\lambda _{\alpha }$ and $%
N_{\alpha }=\dim V_{\alpha }$.

Let $\left\{ \varphi _{i}^{\alpha }\right\} _{i}$ be an orthonormal basis of 
$V_{\alpha }$ with respect the global scalar product $\left\langle \varphi
,\psi \right\rangle $ $=\int_{N^{n}}\varphi \psi dv_{h}$ where $dv_{h}$
denotes the Riemannian measure corresponding to the metric $h$.

We know from (\cite{2}) that the fundamental solution of the heat equation $%
K $, on the manifold $(N^{n},h)$ writes as

\begin{equation}
K(x,y,t)=\sum_{\alpha }e^{-\lambda _{\alpha }t}\sum_{i=1}^{N_{\alpha
}}\varphi _{i}^{\alpha }(x)\varphi _{i}^{\alpha }(y)\text{ for every }x,y%
\text{ in }N^{n}\text{ and }t\text{ in }R_{+}^{\ast }\text{.}  \label{8}
\end{equation}%
From \ (\ref{8}), we deduce that the compact manifold $(N^{n},h)$ is $SH$ if
for every eigenvalue $\lambda _{\alpha }$ there exists a map $\Xi _{\alpha }$%
: $R_{+}\rightarrow R$ with \ 
\begin{equation}
\sum_{i=1}^{N_{\alpha }}\varphi _{i}^{\alpha }(x)\varphi _{i}^{\alpha
}(y)=\Xi _{\alpha }(\rho (x,y))\text{ for every }x,y\text{ in }N^{n}\text{.}
\label{8'}
\end{equation}

\begin{example}
An important class of homogenous $SH$- manifold is the compact symmetric
spaces of rank one which we denote by CROSS. Among the CROSS spaces we quote
the real projective spaces $RP^{n}$, and the Euclidean spheres $S^{n}$ ( see 
\cite{3} ).
\end{example}

Let $(N^{n},h)$ be an homogeneous $SH$- manifold and put for any $y\in N^{n}$%
, $\Lambda (y)=\left( \varphi _{1}^{\alpha }(y),...,\varphi _{N_{\alpha
}}^{\alpha }(y)\right) $. By the relation (\ref{8'}) the values of $\Lambda $
are in the Euclidean sphere $S^{N_{\alpha }-1}$ centred at the origin of $%
R^{N_{\alpha }}$ and of radius $\left( \Xi _{\alpha }(0)Vol(N,g)\right) ^{%
\frac{1}{2}}$.We quote the following lemma

\begin{lemma}
\label{lem1}(\cite{3}) $\Lambda $ is an immersion and its image $\Lambda
(N^{n})$ is a $n$- dimensional submanifold of the Euclidean sphere $%
S^{N_{\alpha }-1}(0,R)$ in $R^{N_{\alpha }}$ of radius $R=\left( \Xi
_{\alpha }(0)Vol(N,g)\right) ^{\frac{1}{2}}$.
\end{lemma}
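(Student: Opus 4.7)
The plan is to establish the two assertions separately from identity (\ref{8'}).

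For the spherical image, I would set $x=y$ in (\ref{8'}) to obtain
\begin{equation*}
|\Lambda(y)|^{2} \;=\; \sum_{i=1}^{N_{\alpha}}\varphi_{i}^{\alpha}(y)^{2} \;=\; \Xi_{\alpha}(0),
\end{equation*}
a constant independent of $y$. Hence $\Lambda(N^{n})$ lies on the centred sphere in $R^{N_{\alpha}}$ of the stated radius (the $\operatorname{Vol}(N,g)$ factor is absorbed into the normalisation of the orthonormal basis). The dimension of the image is then $n$ as soon as the immersion claim is in hand.

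For the immersion property I would argue pointwise. Fix $y_{0}\in N^{n}$, take $v\in T_{y_{0}}N^{n}$ with $d\Lambda_{y_{0}}(v)=0$---equivalently $v(\varphi_{i}^{\alpha})=0$ for every $i$---and show $v=0$. Differentiating (\ref{8'}) in the second argument along $v$ at $y_{0}$ kills the left-hand side, giving
\begin{equation*}
\Xi_{\alpha}'(\rho(x,y_{0}))\,\partial_{v}\rho(x,\cdot)\big|_{y_{0}} \;=\; 0 \qquad \text{for every } x \text{ in a punctured normal ball about } y_{0}.
\end{equation*}
The first variation formula for arc length yields $\partial_{v}\rho(x,\cdot)|_{y_{0}} = -\langle v,\dot\gamma(0)\rangle$, where $\gamma$ is the unit-speed minimising geodesic from $y_{0}$ to $x$. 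As $x$ varies over a small geodesic sphere of radius $r$ around $y_{0}$, the initial tangents $\dot\gamma(0)$ sweep the entire unit sphere of $T_{y_{0}}N^{n}$. Provided such an $r$ can be chosen with $\Xi_{\alpha}'(r)\neq 0$, we deduce $v\perp T_{y_{0}}N^{n}$, hence $v=0$. Homogeneity of $N^{n}$ then upgrades this conclusion from $y_{0}$ to every point.

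The principal obstacle is to rule out the possibility that $\Xi_{\alpha}'$ vanishes identically on $[0,\operatorname{inj}(y_{0}))$. I plan to dispose of it by a short analyticity argument: were $\Xi_{\alpha}$ locally constant at $0$, the analytic function $F(x,y)=\sum_{i}\varphi_{i}^{\alpha}(x)\varphi_{i}^{\alpha}(y)$ would be constant on a neighbourhood of the diagonal, hence constant on $N^{n}\times N^{n}$ by analyticity and connectedness; pairing this constant against $\varphi_{j}^{\alpha}$ in the $x$-slot and invoking $\int_{N^{n}}\varphi_{j}^{\alpha}\,dv_{h}=0$ (which holds because $\lambda_{\alpha}>0$) would force $\varphi_{j}^{\alpha}\equiv 0$, a contradiction. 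A secondary technicality---avoiding the non-smoothness of $\rho$ on the diagonal---is handled exactly by restricting $x$ to a punctured normal neighbourhood of $y_{0}$, where $\rho(x,\cdot)$ remains smooth at $y_{0}$ and the differentiation of (\ref{8'}) is rigorous.
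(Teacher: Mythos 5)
The paper does not prove this lemma: it is quoted verbatim from Besse \cite{3}, so there is no in-text proof to compare yours against, and I can only assess your argument on its own terms. Your central mechanism is sound: differentiating the identity (\ref{8'}) in the second slot along a kernel vector $v$ of $d\Lambda_{y_{0}}$, converting the right-hand side via the first variation of arc length into $\Xi_{\alpha}'(r)\langle v,\dot\gamma(0)\rangle=0$ for every unit direction $\dot\gamma(0)$, and concluding $v=0$ once a small $r$ with $\Xi_{\alpha}'(r)\neq 0$ is found; your restriction of $x$ to a punctured normal ball correctly handles the non-smoothness of $\rho$ on the diagonal.

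Three points deserve tightening. First, the analyticity step is both heavier than necessary and itself in need of justification: eigenfunctions are real-analytic only when the metric is, which here follows from homogeneity (Myers--Steenrod) but not for a general $SH$ manifold. You can bypass it with the computation the paper performs later (in proving that the $\nabla u_{j}$ are conformal): differentiating (\ref{8'}) twice on the diagonal gives $\sum_{i}d\varphi_{i}^{\alpha}\otimes d\varphi_{i}^{\alpha}=\frac{\lambda_{\alpha}}{n}\Xi_{\alpha}(0)\,h$ with $\Xi_{\alpha}(0)=\sum_{i}\varphi_{i}^{\alpha}(y)^{2}>0$. This says directly that $\Lambda^{\ast}$ of the Euclidean metric is a positive constant multiple of $h$, so $\Lambda$ is a homothetic immersion with no case analysis at all; it also yields $\Xi_{\alpha}''(0)=-\frac{\lambda_{\alpha}}{n}\Xi_{\alpha}(0)<0$, the cheap way to see that $\Xi_{\alpha}$ is not locally constant at $0$ if you prefer to keep your route. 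Second, the radius: with the paper's normalization $\int_{N^{n}}\varphi_{i}^{\alpha}\varphi_{j}^{\alpha}\,dv_{h}=\delta_{ij}$ and (\ref{8'}) as written, your computation gives $R=\Xi_{\alpha}(0)^{1/2}$; the extra factor $Vol(N,g)$ in the statement reflects a different convention (orthonormality with respect to the normalized measure $dv_{h}/Vol(N)$, as in \cite{3}), so ``absorbed into the normalisation'' is the right instinct but you should fix one convention and verify the constant. Third, you establish that $\Lambda$ is an immersion but not that its image is a submanifold, which requires injectivity (or at least an embedded image); nothing in your argument addresses this, and it does not follow from the immersion property alone --- though the paper only ever uses the local-embedding consequence, so this gap is harmless for the application.
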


As a corollary of Lemma\ref{lem1}, we have

\begin{corollary}
For an $n$- dimensional SH- manifold every eigenvalue of the Laplacian has
multiplicity greater or equal to $n+1$.
\end{corollary}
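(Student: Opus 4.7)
The plan is to deduce the corollary as an immediate dimensional consequence of \lemref{lem1}. Given an eigenvalue $\lambda_{\alpha}$ (with $\lambda_{\alpha}\neq 0$, so that the $\varphi_{i}^{\alpha}$ are non-constant and $\Lambda$ is well defined as in the lemma), the associated map
\begin{equation*}
\Lambda:N^{n}\longrightarrow \mathbb{R}^{N_{\alpha}},\qquad \Lambda(y)=\bigl(\varphi_{1}^{\alpha}(y),\dots ,\varphi_{N_{\alpha}}^{\alpha}(y)\bigr),
\end{equation*}
is, by \lemref{lem1}, an immersion whose image lies in the round sphere $S^{N_{\alpha}-1}(0,R)\subset \mathbb{R}^{N_{\alpha}}$.

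The key observation is a rank count. Since $\Lambda$ is an immersion of the $n$-dimensional manifold $N^{n}$, one has $\operatorname{rank}(d\Lambda_{y})=n$ at every $y\in N^{n}$. On the other hand, because $\Lambda(N^{n})\subset S^{N_{\alpha}-1}(0,R)$, the differential factors as $d\Lambda_{y}:T_{y}N^{n}\to T_{\Lambda(y)}S^{N_{\alpha}-1}(0,R)$, and the target tangent space has dimension $N_{\alpha}-1$. Hence
\begin{equation*}
n=\operatorname{rank}(d\Lambda_{y})\leq \dim T_{\Lambda(y)}S^{N_{\alpha}-1}(0,R)=N_{\alpha}-1,
\end{equation*}
so that $N_{\alpha}=\dim V_{\alpha}\geq n+1$, which is the claim.

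There is essentially no obstacle: \lemref{lem1} already encodes the substance (the existence of a well-defined immersion into a sphere whose radius depends only on $\alpha$), and all that remains is to note that the sphere $S^{N_{\alpha}-1}(0,R)$ has dimension one less than the ambient space, which is precisely why the bound is $n+1$ rather than $n$. The only point worth flagging is that the statement implicitly concerns eigenvalues for which the construction of $\Lambda$ is non-degenerate; for $\lambda_{\alpha}=0$ the eigenspace of a connected compact manifold consists of constants and the conclusion is vacuous.
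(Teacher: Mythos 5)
Your argument is correct and is precisely the dimension count the paper intends: it states the result as an immediate consequence of Lemma~\ref{lem1} without further proof, and the only content is that an immersion of the $n$-dimensional $N^{n}$ into $S^{N_{\alpha}-1}(0,R)$ forces $n\leq N_{\alpha}-1$. Your remark that the zero eigenvalue must be excluded (its eigenspace being the constants on a connected manifold) is a fair caveat that the paper leaves implicit.
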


\ Since $\Lambda $ is an immersion it is locally an embedding so we can
choose $n+1$ points $m_{1},...,m_{n+1}$ in $N^{n}$ such that their images $%
\Lambda (m_{1}),...,\Lambda (m_{n+1})$ are linear independent positions
vectors in $R^{N_{\alpha }}$. Consider the functions 
\begin{equation}
u_{j}=\sum_{i=1}^{N_{\alpha }}\varphi _{i}^{\alpha }(m_{j})\varphi
_{i}^{\alpha }\text{, }1\leq j\leq n+1\text{,}  \label{8''}
\end{equation}%
where $\left\{ \varphi _{i}^{\alpha }\right\} _{i}$ is an orthonormal basis
of the eigenspace $V_{\alpha }$, clearly $u_{j}$ are also eigenfunctions.
Put 
\begin{equation*}
\Xi _{\alpha }(\rho (m_{j},m_{k}))=\sum_{i=1}^{N_{\alpha }}\varphi
_{i}^{\alpha }(m_{j})\varphi _{i}^{\alpha }(m_{k})\text{.}
\end{equation*}%
Now, we will state the following lemma which will be crucial in the sequel
of this paper.

\begin{lemma}
The matrix $\left( \Xi _{\alpha }(\rho (m_{j},m_{k})\right) _{1\leq j,k\leq
n+1}$ is invertible.
\end{lemma}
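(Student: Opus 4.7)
The plan is to recognize the matrix in question as a Gram matrix. By the very definition of the map $\Lambda$ in the paragraph preceding Lemma~\ref{lem1}, one has
\begin{equation*}
\langle \Lambda(m_j), \Lambda(m_k)\rangle_{\mathbb{R}^{N_{\alpha}}} = \sum_{i=1}^{N_{\alpha}} \varphi_i^{\alpha}(m_j)\varphi_i^{\alpha}(m_k),
\end{equation*}
and by the defining identity (\ref{8'}) of an $SH$-manifold, the right-hand side is precisely $\Xi_{\alpha}(\rho(m_j,m_k))$. Hence the $(n+1)\times(n+1)$ matrix $\bigl(\Xi_{\alpha}(\rho(m_j,m_k))\bigr)$ is exactly the Gram matrix of the vectors $\Lambda(m_1),\dots,\Lambda(m_{n+1}) \in \mathbb{R}^{N_{\alpha}}$.

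The next step is to invoke the classical linear-algebra fact that the Gram matrix of a finite family of vectors in a Euclidean space is invertible if and only if those vectors are linearly independent. (One proof: if $X$ is the matrix whose columns are the $\Lambda(m_j)$, then the Gram matrix equals $X^{T}X$; its kernel coincides with the kernel of $X$, since $X^{T}Xv = 0 \Rightarrow v^{T}X^{T}Xv = \|Xv\|^2 = 0 \Rightarrow Xv = 0$.)

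Finally, one combines this with the choice of the points $m_1,\dots,m_{n+1}$. Since $\Lambda$ is an immersion by Lemma~\ref{lem1}, it is locally an embedding, so as noted just before the statement of the lemma, we may select the $m_j$ so that $\Lambda(m_1),\dots,\Lambda(m_{n+1})$ are linearly independent position vectors in $\mathbb{R}^{N_{\alpha}}$. Linear independence of these $n+1$ vectors, together with the Gram-matrix criterion, yields the invertibility of $\bigl(\Xi_{\alpha}(\rho(m_j,m_k))\bigr)_{1\leq j,k \leq n+1}$.

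There is no real obstacle to overcome here; the content of the lemma is essentially a reformulation of the definition of $\Xi_{\alpha}$ combined with a standard fact on Gram determinants. The only point requiring care is to check that the linear independence assumption on the $\Lambda(m_j)$ truly can be arranged in an $n$-dimensional submanifold sitting in $S^{N_{\alpha}-1}(0,R) \subset \mathbb{R}^{N_{\alpha}}$; since $N_{\alpha}\geq n+1$ by the preceding Corollary and $\Lambda$ is an immersion, a generic choice of $n+1$ nearby points whose images span an $(n+1)$-plane through the origin is possible, justifying the selection made above.
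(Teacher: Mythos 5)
Your proposal is correct, but it reaches the conclusion by a genuinely different and considerably shorter route than the paper. The key identification is common to both arguments: by (\ref{8'}), the entry $\Xi_{\alpha}(\rho(m_j,m_k))$ equals $\langle \Lambda(m_j),\Lambda(m_k)\rangle$, so the matrix in question is $X^{T}X$ where $X$ is the $N_{\alpha}\times(n+1)$ matrix whose columns are the $\Lambda(m_j)$; and both arguments ultimately rest on the linear independence of $\Lambda(m_1),\dots,\Lambda(m_{n+1})$, which is exactly how the points $m_j$ were chosen just before the statement of the lemma. Where you diverge is in how invertibility is extracted from this. The paper proves, by a long induction on the size of the matrix, the identity
\begin{equation*}
\det\left( \Xi _{\alpha }(\rho (m_{j},m_{k}))\right) =\frac{1}{\left(
n+1\right) !}\sum_{i_{1},...,i_{n+1}=1}^{N_{\alpha }}\det \left( \varphi
_{i_{a}}^{\alpha }(m_{b})\right) _{1\leq a,b\leq n+1}^{2},
\end{equation*}
which is the Cauchy--Binet (Gram determinant) expansion of $\det(X^{T}X)$ as a sum of squared $(n+1)\times(n+1)$ minors, and then observes that the rank-$(n+1)$ hypothesis forces at least one minor to be nonzero. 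You instead use the two-line kernel argument $X^{T}Xv=0\Rightarrow \Vert Xv\Vert ^{2}=0\Rightarrow Xv=0$, which avoids the determinant computation entirely. Your route is more elementary and cleaner; the paper's route has the (unused elsewhere) byproduct of an explicit sum-of-squares formula for the determinant. One minor caveat: your closing remark that a ``generic choice'' of points with linearly independent images is possible is asserted at the same level of detail as in the paper itself (which derives it from $\Lambda$ being an immersion, hence locally an embedding, together with Lemma~\ref{lem1}), so nothing is lost, but neither text spells this selection out fully.
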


\begin{proof}
Let us first show that the determinant of the matrix $\left( \Xi _{\alpha
}(\rho (m_{j},m_{k})\right) _{1\leq j,k\leq n+1}$ enjoys 
\begin{equation}
\det \left( \Xi _{\alpha }(\rho (m_{j},m_{k}))\right) =\frac{1}{\left(
n+1\right) !}\sum_{i_{1},i_{2},...,i_{n+1}=1}^{N_{\alpha }}\det \left( 
\begin{array}{ccc}
\varphi _{i_{1}}^{\alpha }(m_{1}) & ... & \varphi _{i_{1}}^{\alpha }(m_{n+1})
\\ 
\vdots &  & \vdots \\ 
\varphi _{_{i_{n+1}}}^{\alpha }(m_{1}) & ... & \varphi _{_{i_{n+1}}}^{\alpha
}(m_{n+1})%
\end{array}%
\right) ^{2}\text{.}  \label{8"}
\end{equation}%
We proceed by recurrence. In the case $n+1=2$, the determinant of the
matrix\ $\left( \Xi _{\alpha }(\rho (m_{j},m_{k})\right) _{1\leq j,k\leq 2}$
is given by%
\begin{equation*}
\det \left( \Xi _{\alpha }(\rho (m_{j},m_{k}))\right)
=\sum_{i_{1}=1}^{N_{\alpha }}\varphi _{i_{1}}^{\alpha
}(m_{1})^{2}\sum_{i_{2}=1}^{N_{\alpha }}\varphi _{i_{2}}^{\alpha
}(m_{2})^{2}-\left( \sum_{i_{1},i_{2}=1}^{N_{\alpha }}\varphi
_{i_{1}}^{\alpha }(m_{1})\varphi _{i_{2}}^{\alpha }(m_{2})\right) ^{2}
\end{equation*}%
\begin{equation*}
=\sum_{i_{1}\neq i_{2}}\varphi _{i_{1}}^{\alpha }(m_{1})^{2}\varphi
_{_{i_{2}}}^{\alpha }(m_{2})^{2}-2\sum_{i_{1}<i_{2}}\varphi _{i_{1}}^{\alpha
}(m_{1})\varphi _{i_{1}}^{\alpha }(m_{2})\varphi _{i_{2}}^{\alpha
}(m_{1})\varphi _{i_{2}}^{\alpha }(m_{2})
\end{equation*}%
\begin{equation*}
=\sum_{1\leq i_{1}<i_{2}\leq N_{\alpha }}\left( \varphi _{i_{1}}^{\alpha
}(m_{1})\varphi _{_{i_{2}}}^{\alpha }(m_{2})-\varphi _{i_{1}}^{\alpha
}(m_{2})\varphi _{_{i_{2}}}^{\alpha }(m_{1})\right) ^{2}
\end{equation*}%
\begin{equation}
=\frac{1}{2}\sum_{i_{1},i_{2}=1}^{N_{\alpha }}\det \left( 
\begin{array}{cc}
\varphi _{i_{1}}^{\alpha }(m_{1}) & \varphi _{i_{1}}^{\alpha }(m_{2}) \\ 
\varphi _{_{i_{2}}}^{\alpha }(m_{1}) & \varphi _{_{i_{2}}}^{\alpha }(m_{2})%
\end{array}%
\right) ^{2}\text{.}  \label{8'''}
\end{equation}%
For any integer $2\leq p<n+1$, suppose that%
\begin{equation*}
\det \left( \Xi _{\alpha }(\rho (m_{j},m_{k}))\right) =\frac{1}{p!}%
\sum_{i_{1},...,i_{p=1}}^{N_{\alpha }}\det \left( 
\begin{array}{ccc}
\varphi _{i_{1}}^{\alpha }(m_{1}) & \cdots & \varphi _{i_{1}}^{\alpha
}(m_{p}) \\ 
\vdots &  & \vdots \\ 
\varphi _{i_{p}}^{\alpha }(m_{2}) & \cdots & \varphi _{i_{p}}^{\alpha
}(m_{p})%
\end{array}%
\right) ^{2}
\end{equation*}%
We have 
\begin{equation*}
\det \left( \Xi _{\alpha }(\rho (m_{j},m_{k}))\right) =\sum_{\sigma \in
S_{p+1}}sgn(\sigma )\Xi _{\alpha }(\rho (m_{1},m_{\sigma (1)})...\Xi
_{\alpha }(\rho (m_{p+1},m_{\sigma (p+1)})
\end{equation*}%
\begin{equation*}
=\sum_{i_{1},...,i_{p+1}=1}^{N_{\alpha }}\sum_{\sigma \in Sp+1}sgn(\sigma
)\varphi _{i_{1}}^{\alpha }(m_{1})\varphi _{i_{1}}^{\alpha }(m_{\sigma
(1)})...\varphi _{i_{p+1}}^{\alpha }(m_{p+1})\varphi _{i_{p+1}}^{\alpha
}(m_{\sigma (p+1)})
\end{equation*}%
\begin{equation*}
=\sum_{i_{1},...,i_{p+1}=1}^{N_{\alpha }}\varphi _{i_{1}}^{\alpha
}(m_{1})^{2}\sum_{\sigma \in S_{p+1}}sgn(\sigma )\varphi _{i_{2}}^{\alpha
}(m_{2})\varphi _{i_{2}}^{\alpha }(m_{\sigma (2)})...\varphi
_{i_{p+1}}^{\alpha }(m_{p+1})\varphi _{i_{p+1}}^{\alpha }(m_{\sigma (p+1)})
\end{equation*}%
\begin{equation}
+\sum_{i_{1},...,i_{p+1}=1}^{N_{\alpha }}\sum_{\sigma \in Sp+1,\sigma
(1)\neq 1}sgn(\sigma )\varphi _{i_{1}}^{\alpha }(m_{1})\varphi
_{i_{1}}^{\alpha }(m_{\sigma (1)})...\varphi _{i_{p+1}}^{\alpha
}(m_{p+1})\varphi _{i_{p+1}}^{\alpha }(m_{\sigma (p+1)}  \label{9'}
\end{equation}%
where $S_{p+1}$ denotes the finite cyclic group of cardinal $p+1$ and $%
sgn(\sigma )$ stands for the sign of the permutation $\sigma $.

By the recurrent hypothesis the first term of the right hand side of the
equality (\ref{9'}) becomes%
\begin{equation*}
\sum_{i_{1},...,i_{p+1}=1}^{N_{\alpha }}\varphi _{i_{1}}^{\alpha
}(m_{1})^{2}\sum_{\sigma \in S_{p+1}}sgn(\sigma )\varphi _{i_{2}}^{\alpha
}(m_{2})\varphi _{i_{2}}^{\alpha }(m_{\sigma (2)})...\varphi
_{i_{p+1}}^{\alpha }(m_{p+1})\varphi _{i_{p+1}}^{\alpha }(m_{\sigma (p+1)})=
\end{equation*}%
\begin{equation*}
=\frac{1}{p!}\sum_{i_{1},...,i_{p+1}=1}^{N_{\alpha }}\varphi
_{i_{1}}^{\alpha }(m_{1})^{2}\det \left( 
\begin{array}{ccc}
\varphi _{i_{2}}^{\alpha }(m_{2}) & \cdots & \varphi _{i_{2}}^{\alpha
}(m_{p+1}) \\ 
\vdots &  & \vdots \\ 
\varphi _{i_{p+1}}^{\alpha }(m_{2}) & \cdots & \varphi _{i_{p+1}}^{\alpha
}(m_{p+1})%
\end{array}%
\right) ^{2}
\end{equation*}%
\begin{equation*}
=\frac{1}{\left( p+1\right) !}\sum_{i_{1},...,i_{p+1}=1}^{N_{\alpha
}}\varphi _{i_{1}}^{\alpha }(m_{1})^{2}\left( \det \left( 
\begin{array}{ccc}
\widehat{\varphi _{i_{1}}^{\alpha }(m_{2})} & \cdots & \widehat{\varphi
_{i_{1}}^{\alpha }(m_{p+1})} \\ 
\varphi _{i_{2}}^{\alpha }(m_{2}) & \cdots & \varphi _{i_{2}}^{\alpha
}(m_{p+1}) \\ 
\vdots &  & \vdots \\ 
\varphi _{i_{p+1}}^{\alpha }(m_{2}) & \cdots & \varphi _{i_{p+1}}^{\alpha
}(m_{p+1})%
\end{array}%
\right) ^{2}+\cdots \right.
\end{equation*}%
\begin{equation}
\left. +\varphi _{i_{p+1}}^{\alpha }(m_{1})^{2}\det \left( 
\begin{array}{ccc}
\varphi _{i_{1}}^{\alpha }(m_{2}) & \cdots & \varphi _{i_{1}}^{\alpha
}(m_{p+1}) \\ 
\vdots &  & \vdots \\ 
\varphi _{i_{p}}^{\alpha }(m_{2}) & \cdots & \varphi _{i_{p}}^{\alpha
}(m_{p+1}) \\ 
\widehat{\varphi _{i_{p+1}}^{\alpha }(m_{2})} & \cdots & \widehat{\varphi
_{i_{p+1}}^{\alpha }(m_{p+1})}%
\end{array}%
\right) ^{2}\right)  \label{9''}
\end{equation}%
where the writing $\widehat{x}$ means that $x$ does not appear.

The last term of the right hand side of (\ref{9'}) writes%
\begin{equation*}
\sum_{i_{1},...,i_{p+1}=1}^{N_{\alpha }}\sum_{\sigma \in Sp+1,\sigma (1)\neq
1}sgn(\sigma )\varphi _{i_{1}}^{\alpha }(m_{1})\varphi _{i_{1}}^{\alpha
}(m_{\sigma (1)})...\varphi _{i_{p+1}}^{\alpha }(m_{p+1})\varphi
_{i_{p+1}}^{\alpha }(m_{\sigma (p+1)}=
\end{equation*}%
\begin{equation*}
\sum_{i_{1},...,i_{p+1}=1}^{N_{\alpha }}\varphi _{i_{1}}^{\alpha
}(m_{1})...\varphi _{i_{p+1}}^{\alpha }(m_{p+1})\left\{ -\varphi
_{i_{2}(m_{1})}\sum_{\sigma \in Sp+1,\sigma (1)\neq 1}sgn(\sigma )\varphi
_{i_{1}}^{\alpha }(m_{\sigma (1)})...\varphi _{i_{p+1}}^{\alpha }(m_{\sigma
(p+1)}+\cdots \right.
\end{equation*}%
\begin{equation*}
\left. +(-1)^{p}\varphi _{i_{p+1}}^{\alpha }(m_{1})\sum_{\sigma \in
Sp+1,\sigma (1)\neq 1}sgn(\sigma )\varphi _{i_{1}}^{\alpha }(m_{\sigma
(1)})...\varphi _{i_{p+1}}^{\alpha }(m_{\sigma (p+1)}\right\} =
\end{equation*}%
\begin{equation*}
=\sum_{i_{1},...,i_{p+1}=1}^{N_{\alpha }}\varphi _{i_{1}}^{\alpha
}(m_{1})...\varphi _{i_{p+1}}^{\alpha }(m_{p+1})\left\{ -\varphi
_{i_{1}(m_{1})}^{\alpha }\det \left( 
\begin{array}{ccc}
\varphi _{i_{1}}^{\alpha }(m_{2}) & \cdots & \varphi _{i_{1}}^{\alpha
}(m_{p+1}) \\ 
\widehat{\varphi _{i_{2}}^{\alpha }(m_{2})} & \cdots & \widehat{\varphi
_{i_{2}}^{\alpha }(m_{p+1})} \\ 
\vdots &  & \vdots \\ 
\varphi _{i_{p+1}}^{\alpha }(m_{2}) & \cdots & \varphi _{i_{p+1}}^{\alpha
}(m_{p+1})%
\end{array}%
\right) +\cdots \right.
\end{equation*}%
\begin{equation*}
\left. +(-1)^{p}\varphi _{i_{p+1}}^{\alpha }(m_{1})\det \left( 
\begin{array}{ccc}
\varphi _{i_{1}}^{\alpha }(m_{2}) & \cdots & \varphi _{i_{1}}^{\alpha
}(m_{p+1}) \\ 
\vdots &  & \vdots \\ 
\varphi _{i_{p+1}}^{\alpha }(m_{2}) & \cdots & \varphi _{i_{p+1}}^{\alpha
}(m_{p+1}) \\ 
\widehat{\varphi _{i_{p+1}}^{\alpha }(m_{2})} & \cdots & \widehat{\varphi
_{i_{p+1}}^{\alpha }(m_{p+1})}%
\end{array}%
\right) \right\} =
\end{equation*}%
By the definition of the determinant, we get%
\begin{equation*}
=\frac{1}{p!}\sum_{i_{1},...,i_{p+1}=1}^{N_{\alpha }}-\varphi
_{i_{1}}^{\alpha }(m_{1})\varphi _{i_{2}}^{\alpha }(m_{1})\sum_{\sigma \in
S_{p},\sigma (2)\neq 1}\varphi _{i_{2}}^{\alpha }(m_{\sigma (2)})...\varphi
_{i_{p+1}}^{\alpha }(m_{\sigma (p+1)})
\end{equation*}%
\begin{equation*}
\times \det \left( 
\begin{array}{ccc}
\varphi _{i_{1}}^{\alpha }(m_{\sigma (2)}) & \cdots & \varphi
_{i_{1}}^{\alpha }\left( m_{\sigma (p+1)}\right) \\ 
\widehat{\varphi _{i_{2}}^{\alpha }(m_{\sigma (2)})} & \cdots & \widehat{%
\varphi _{i_{2}}^{\alpha }(m_{\sigma (p+1)})} \\ 
\vdots &  & \vdots \\ 
\varphi _{i_{p+1}}^{\alpha }(m_{\sigma (2)}) & \cdots & \varphi
_{i_{p+1}}^{\alpha }(m_{\sigma (p+1)})%
\end{array}%
\right) +\cdots
\end{equation*}%
\begin{equation*}
+\frac{1}{p!}\sum_{i_{1},...,i_{p+1}=1}^{N_{\alpha }}\left( -1\right)
^{p}\varphi _{i_{1}}^{\alpha }(m_{1})\varphi _{i_{p+1}}^{\alpha
}(m_{1})\sum_{\sigma \in S_{p},\sigma (p+1)\neq 1}\varphi _{i_{2}}^{\alpha
}(m_{\sigma (2)})...\varphi _{i_{p+1}}^{\alpha }(m_{\sigma (p+1)})
\end{equation*}%
\begin{equation*}
\times \det \left( 
\begin{array}{ccc}
\varphi _{i_{1}}^{\alpha }(m_{\sigma (2)}) & \cdots & \varphi
_{i_{1}}^{\alpha }\left( m_{\sigma (p+1)}\right) \\ 
&  &  \\ 
\varphi _{i_{p}}^{\alpha }(m_{\sigma (2)}) & \cdots & \varphi
_{i_{p}}^{\alpha }(m_{\sigma (p+1)}) \\ 
\widehat{\varphi _{i_{p+1}}(m_{\sigma (2)})} & \cdots & \widehat{\varphi
_{i_{p+1}}(m_{\sigma (p+1)})}%
\end{array}%
\right)
\end{equation*}%
Now, by permuting the indices $(i_{1},...,i_{p+1})$ and noting that it turns
out to permute $(m_{2},...,m_{p+1})$, we deduce%
\begin{equation*}
=\frac{1}{p!}\sum_{i_{1},...,i_{p+1}=1}^{N_{\alpha }}-\varphi
_{i_{1}}^{\alpha }(m_{1})\varphi _{i_{2}}^{\alpha }(m_{1})\sum_{\sigma \in
S_{p},\sigma (2)\neq 1}sgn(\sigma )\varphi _{i_{2}}^{\alpha }(m_{\sigma
(2)})...\varphi _{i_{p+1}}^{\alpha }(m_{\sigma (p+1)})
\end{equation*}%
\begin{equation*}
\times \det \left( 
\begin{array}{ccc}
\varphi _{i_{1}}^{\alpha }(m_{2}) & \cdots & \varphi _{i_{1}}^{\alpha
}\left( m_{p+1}\right) \\ 
\widehat{\varphi _{i_{2}}^{\alpha }(m_{2})} & \cdots & \widehat{\varphi
_{i_{2}}^{\alpha }(m_{p+1})} \\ 
\vdots &  & \vdots \\ 
\varphi _{i_{p+1}(m_{2})}^{\alpha } & \cdots & \varphi
_{i_{p+1}(m_{p+1})}^{\alpha }%
\end{array}%
\right) +\cdots
\end{equation*}%
\begin{equation*}
+\frac{1}{p!}\sum_{i_{1},...,i_{p+1}=1}^{N_{\alpha }}\left( -1\right)
^{p}\varphi _{i_{1}}^{\alpha }(m_{1})\varphi _{i_{p+1}}^{\alpha
}(m_{1})\sum_{\sigma \in S_{p},\sigma (p+1)\neq 1}sgn(\sigma )\varphi
_{i_{2}}^{\alpha }(m_{\sigma (2)})...\varphi _{i_{p+1}}^{\alpha }(m_{\sigma
(p+1)})
\end{equation*}%
\begin{equation*}
\times \det \left( 
\begin{array}{ccc}
\varphi _{i_{1}}^{\alpha }(m_{2}) & \cdots & \varphi _{i_{1}}^{\alpha
}\left( m_{p+1}\right) \\ 
\vdots &  & \vdots \\ 
\varphi _{i_{p}}^{\alpha }(m_{2}) & \cdots & \varphi _{i_{p}}^{\alpha
}(m_{p+1}) \\ 
\widehat{\varphi _{i_{p+1}}(m_{2})} & \cdots & \widehat{\varphi
_{i_{p+1}}(m_{p+1})}%
\end{array}%
\right)
\end{equation*}%
\begin{equation*}
=\frac{1}{p!}\sum_{i_{1},...,i_{p+1}=1}^{N_{\alpha }}\left( -1\right)
\varphi _{i_{1}}^{\alpha }(m_{1})\varphi _{i_{2}}^{\alpha }(m_{1})\det
\left( 
\begin{array}{ccc}
\varphi _{i_{1}}^{\alpha }(m_{2}) & \cdots & \varphi _{i_{1}}^{\alpha
}\left( m_{p+1}\right) \\ 
\widehat{\varphi _{i_{2}}^{\alpha }(m_{2})} & \cdots & \widehat{\varphi
_{i_{2}}^{\alpha }(m_{p+1})} \\ 
\vdots &  & \vdots \\ 
\varphi _{i_{p+1}}^{\alpha }(m_{2}) & \cdots & \varphi _{i_{p+1}}^{\alpha
}(m_{p+1})%
\end{array}%
\right)
\end{equation*}%
\begin{equation*}
\times \det \left( 
\begin{array}{ccc}
\widehat{\varphi _{i_{1}}^{\alpha }(m_{2})} & \cdots & \widehat{\varphi
_{i_{1}}^{\alpha }\left( m_{p+1}\right) } \\ 
\varphi _{i_{2}(m_{2})}^{\alpha } & \cdots & \varphi
_{i_{2}(m_{p+1})}^{\alpha } \\ 
\vdots &  & \vdots \\ 
\varphi _{i_{p+1}(m_{2})}^{\alpha } & \cdots & \varphi
_{i_{p+1}(m_{p+1})}^{\alpha }%
\end{array}%
\right) +\cdots
\end{equation*}%
\begin{equation*}
+\frac{1}{p!}\sum_{i_{1},...,i_{p+1}=1}^{N_{\alpha }}\left( -1\right)
^{p}\varphi _{i_{1}}^{\alpha }(m_{1})\varphi _{i_{p+1}}^{\alpha }(m_{1})\det
\left( 
\begin{array}{ccc}
\widehat{\varphi _{i_{1}}^{\alpha }(m_{2})} & \cdots & \widehat{\varphi
_{i_{1}}^{\alpha }\left( m_{p+1}\right) } \\ 
\varphi _{i_{2}}^{\alpha }(m_{2}) & \cdots & \varphi _{i_{2}}^{\alpha
}(m_{p+1}) \\ 
\vdots &  & \vdots \\ 
\varphi _{i_{p+1}}^{\alpha }(m_{2}) & \cdots & \varphi _{i_{p+1}}^{\alpha
}(m_{p+1})%
\end{array}%
\right)
\end{equation*}%
\begin{equation*}
\times \det \left( 
\begin{array}{ccc}
\varphi _{i_{1}}^{\alpha }(m_{2}) & \cdots & \varphi _{i_{1}}^{\alpha
}\left( m_{p+1}\right) \\ 
\varphi _{i_{2}}^{\alpha }(m_{2}) & \cdots & \varphi _{i_{2}}^{\alpha
}(m_{p+1}) \\ 
\vdots &  & \vdots \\ 
\widehat{\varphi _{i_{p+1}}^{\alpha }(m_{2})} & \cdots & \widehat{\varphi
_{i_{p+1}}^{\alpha }(m_{p+1})}%
\end{array}%
\right)
\end{equation*}%
\begin{equation*}
=\frac{1}{\left( p-1\right) !}\sum_{i_{1},...,i_{p+1}=1}^{N_{\alpha }}\left(
-1\right) \varphi _{i_{1}}^{\alpha }(m_{1})\varphi _{i_{2}}^{\alpha
}(m_{1})\det \left( 
\begin{array}{ccc}
\varphi _{i_{1}}^{\alpha }(m_{2}) & \cdots & \varphi _{i_{1}}^{\alpha
}\left( m_{p+1}\right) \\ 
\widehat{\varphi _{i_{2}}^{\alpha }(m_{2})} & \cdots & \widehat{\varphi
_{i_{2}}^{\alpha }(m_{p+1})} \\ 
\vdots &  & \vdots \\ 
\varphi _{i_{p+1}}^{\alpha }(m_{2}) & \cdots & \varphi _{i_{p+1}}^{\alpha
}(m_{p+1})%
\end{array}%
\right)
\end{equation*}%
\begin{equation*}
\times \det \left( 
\begin{array}{ccc}
\widehat{\varphi _{i_{1}}^{\alpha }(m_{2})} & \cdots & \widehat{\varphi
_{i_{1}}^{\alpha }\left( m_{p+1}\right) } \\ 
\varphi _{i_{2}}^{\alpha }(m_{2}) & \cdots & \varphi _{i_{2}}^{\alpha
}(m_{p+1}) \\ 
\vdots &  & \vdots \\ 
\varphi _{i_{p+1}}^{\alpha }(m_{2}) & \cdots & \varphi _{i_{p+1}}^{\alpha
}(m_{p+1})%
\end{array}%
\right)
\end{equation*}%
\begin{equation}
=\frac{2}{\left( p+1\right) !}\sum_{i_{1},...,i_{p+1}=1}^{N_{\alpha
}}\sum_{1\leq j<l\leq p+1}\left( -1\right) ^{j+l}\varphi _{i_{j}}^{\alpha
}(m_{1})\varphi _{i_{l}}^{\alpha }(m_{1})\det \left( 
\begin{array}{ccc}
\varphi _{i_{1}}^{\alpha }(m_{2}) & \cdots & \varphi _{i_{1}}^{\alpha
}\left( m_{p+1}\right) \\ 
\vdots &  & \vdots \\ 
\widehat{\varphi _{i_{l}}^{\alpha }(m_{2})} & \cdots & \widehat{\varphi
_{i_{l}}^{\alpha }(m_{p+1})} \\ 
\vdots &  & \vdots \\ 
\varphi _{i_{p+1}}^{\alpha }(m_{2}) & \cdots & \varphi _{i_{p+1}}^{\alpha
}(m_{p+1})%
\end{array}%
\right) .  \label{9"}
\end{equation}%
\begin{equation*}
\times \det \left( 
\begin{array}{ccc}
\varphi _{i_{1}}^{\alpha }(m_{2}) & \cdots & \varphi _{i_{1}}^{\alpha
}\left( m_{p+1}\right) \\ 
\vdots &  & \vdots \\ 
\widehat{\varphi _{i_{j}}^{\alpha }(m_{2})} & \cdots & \widehat{\varphi
_{i_{j}}^{\alpha }(m_{p+1})} \\ 
\vdots &  & \vdots \\ 
\varphi _{i_{p+1}}^{\alpha }(m_{2}) & \cdots & \varphi _{i_{p+1}}^{\alpha
}(m_{p+1})%
\end{array}%
\right) .
\end{equation*}%
Reporting (\ref{9''}) and (\ref{9"}) in (\ref{9'}), we obtain the equality(%
\ref{8"}).

Now, Since by construction the matrix $\left( \varphi _{j}^{\alpha
}(m_{k})\right) _{\substack{ 1\leq j\leq N_{\alpha }  \\ 1\leq k\leq n+1}}$
is of rank $n+1$, there is a cofactor $\left( \varphi _{j}^{\alpha
}(m_{k})\right) _{1\leq j,k\leq n+1}$ with determinant $\det \left( \varphi
_{j}^{\alpha }(m_{k})\right) \neq 0$, and from the relation (\ref{8"}), we
deduce that $\det \left( \Xi _{\alpha }(\rho (m_{j},m_{k}))\right) \neq 0$.
\end{proof}

\begin{lemma}
The gradient vector fields $\nabla u_{j}$, $j=1,...,n+1$ are linearly
independent.
\end{lemma}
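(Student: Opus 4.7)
The plan is to reduce the linear independence of the gradients to the invertibility of the matrix $\bigl(\Xi_{\alpha}(\rho(m_j,m_k))\bigr)$ established in the previous lemma. First I would observe that by the defining property (\ref{8'}) of an $SH$-manifold, the functions $u_j$ defined in (\ref{8''}) can be rewritten intrinsically as $u_j(y)=\Xi_{\alpha}(\rho(m_j,y))$; in particular $u_j(m_k)=\Xi_{\alpha}(\rho(m_j,m_k))$.

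Next, suppose we have a nontrivial relation $\sum_{j=1}^{n+1}c_j\nabla u_j=0$ on $N^n$. Then the function $u=\sum_j c_j u_j$ has zero differential everywhere, hence is constant. On the other hand, $u$ is a linear combination of eigenfunctions of $\Delta$ associated with the same eigenvalue $\lambda_{\alpha}$, so $\Delta u=\lambda_{\alpha} u$. Since the $\lambda_{\alpha}$ we are working with is nonzero, the only constant eigenfunction is the zero function, and therefore $u\equiv 0$.

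Finally, I would exploit this identity pointwise: evaluating $\sum_j c_j u_j\equiv 0$ at each of the points $m_1,\dots,m_{n+1}$ yields the linear system
\begin{equation*}
\sum_{j=1}^{n+1}c_j\,\Xi_{\alpha}(\rho(m_j,m_k))=0,\qquad k=1,\ldots,n+1.
\end{equation*}
By the previous lemma, the coefficient matrix is invertible, forcing $c_1=\cdots=c_{n+1}=0$, which proves the linear independence.

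The only subtle point in the argument is the passage from $\nabla u=0$ to $u=0$, which requires $\lambda_{\alpha}\neq 0$; this is implicit in the setting of the paper, where we always work with a nonvanishing eigenvalue of the Laplacian on the target. Everything else is a direct consequence of the $SH$-property (\ref{8'}) and the invertibility statement already proved.
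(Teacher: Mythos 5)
Your argument is correct, and it reaches the same linear system as the paper but by a different mechanism. The paper pairs the relation $\sum_{j}\xi_{j}\nabla u_{j}=0$ with each $\nabla u_{k}$ in $L^{2}(N^{n})$, integrates by parts to get $\lambda_{\alpha}\sum_{j}\xi_{j}\int_{N^{n}}u_{j}u_{k}\,dv_{h}=0$, and then uses the orthonormality of the basis $\left\{\varphi_{i}^{\alpha}\right\}$ to identify the Gram matrix $\bigl(\int_{N^{n}}u_{j}u_{k}\,dv_{h}\bigr)$ with $\bigl(\Xi_{\alpha}(\rho(m_{j},m_{k}))\bigr)$. You instead observe that a vanishing gradient forces $u=\sum_{j}c_{j}u_{j}$ to be constant on the (connected, compact) target, hence identically zero because it is an eigenfunction for a nonzero eigenvalue, and you then evaluate at the points $m_{1},\dots,m_{n+1}$, where $u_{j}(m_{k})=\Xi_{\alpha}(\rho(m_{j},m_{k}))$ follows directly from the $SH$ kernel identity. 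Both routes hinge on the same key lemma, the invertibility of $\bigl(\Xi_{\alpha}(\rho(m_{j},m_{k}))\bigr)$, and both require $\lambda_{\alpha}\neq 0$ --- the paper to cancel the factor $\lambda_{\alpha}$, you to rule out nonzero constants; you make this hypothesis explicit where the paper leaves it tacit. Your version is marginally more elementary, since it avoids the integration by parts and uses the orthonormality of the $\varphi_{i}^{\alpha}$ only through the kernel identity, while the paper's computation has the side benefit of exhibiting the matrix as the $L^{2}$ Gram matrix of the eigenfunctions $u_{j}$. Either way the proof is complete.
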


\begin{proof}
Let $\xi _{j}\in R$ be real numbers such that $\sum_{j=1}^{n+1}\xi
_{j}\nabla u_{j}=0.$ So 
\begin{equation*}
0=\sum_{j=1}^{n+1}\xi _{j}\int_{N^{n}}\left\langle \nabla u_{j},\nabla
u_{k}\right\rangle dv_{h}=\sum_{j=1}^{n+1}\xi _{j}\int_{N^{n}}\Delta
u_{j}u_{k}dv_{h}
\end{equation*}%
\begin{equation*}
=\lambda _{\alpha }\sum_{j=1}^{n+1}\xi _{j}\int_{N^{n}}u_{j}u_{k}dv_{h}
\end{equation*}%
\begin{equation*}
=\lambda _{\alpha }\sum_{j=1}^{n+1}\xi _{j}\sum_{i=1}^{N_{\alpha
}}\sum_{l=1}^{N_{\alpha }}\int_{N^{n}}\varphi _{i}^{\alpha }(m_{j})\varphi
_{l}^{\alpha }(m_{k})dv_{h}
\end{equation*}%
\begin{equation*}
=\lambda _{\alpha }\sum_{j=1}^{n+1}\xi _{j}\sum_{i=1}^{N_{\alpha }}\varphi
_{i}^{\alpha }(m_{j})\varphi _{i}^{\alpha }(m_{k})
\end{equation*}%
and since%
\begin{equation*}
\sum_{i=1}^{N_{\alpha }}\varphi _{i}^{\alpha }(m_{j})\varphi _{i}^{\alpha
}(m_{k})=\Xi (\rho (m_{j},m_{k}))
\end{equation*}%
we get 
\begin{equation*}
\sum_{j=1}^{n+1}\xi _{j}.\Xi _{\alpha }(\rho (m_{j},m_{k}))=0
\end{equation*}%
so if the matrix $\Xi (\rho (m_{j},m_{k}))_{1\leq j,k\leq n+1}$ is
invertible, we get that $\xi _{j}=0$ for $j=1,...,n+1$.
\end{proof}

\begin{lemma}
The gradient vector fields $\nabla u_{j}$, $j=1,...,n+1$ are conformal
vector fields\ on $(N^{n},h).$
\end{lemma}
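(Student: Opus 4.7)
Plan: The strategy is to show that the Hessian of $u_j$ is a scalar multiple of $h$, which, via the identity $L_{\nabla u_j} h = 2\,\mathrm{Hess}(u_j)$ valid for any gradient vector field, is equivalent to $\nabla u_j$ being conformal.

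First I rewrite $u_j$ using the strongly harmonic structure. Combining the defining identity (8') for SH-manifolds with the construction (8'') of $u_j$, one has $u_j(y) = \Xi_\alpha(\rho(m_j, y))$, so $u_j$ is a radial function centered at $m_j$. Setting $\rho_j = \rho(m_j,\cdot)$ and $F = \Xi_\alpha$, the standard chain rule for the Hessian of a radial function gives
\[
\mathrm{Hess}(u_j) = F''(\rho_j)\, d\rho_j\otimes d\rho_j + F'(\rho_j)\,\mathrm{Hess}(\rho_j).
\]

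Next I exploit homogeneity. The isotropy subgroup $H_{m_j}$ of the isometry group of $(N^n,h)$ fixes $\rho_j$ and, by homogeneity together with the radial dependence of the heat kernel, acts transitively on each geodesic sphere centered at $m_j$. On the rank-one homogeneous SH-examples (the CROSSes) this isotropy acts irreducibly on the orthogonal complement of the radial direction at each point of a geodesic sphere, so a Schur-type argument forces
\[
\mathrm{Hess}(\rho_j) = \gamma(\rho_j)\bigl(h - d\rho_j\otimes d\rho_j\bigr),
\]
for some scalar function $\gamma$ depending only on $\rho_j$; equivalently, the geodesic spheres about $m_j$ are totally umbilical.

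Substituting into the expression for $\mathrm{Hess}(u_j)$, conformality reduces to the identity $F''(\rho)=F'(\rho)\gamma(\rho)$. This follows from the eigenfunction equation $\Delta u_j=\lambda_\alpha u_j$ combined with $\Delta=-\mathrm{tr}\,\mathrm{Hess}$: the trace relation produces a second-order radial ODE for $F$, which, together with the independent characterization of $\Xi_\alpha$ as the radial heat-kernel coefficient on an SH-manifold, forces $F''=F'\gamma$. One then obtains $\mathrm{Hess}(u_j)=-\tfrac{\lambda_\alpha}{n}\,u_j\, h$, so $\nabla u_j$ is conformal with conformal factor $\sigma_j=-\tfrac{\lambda_\alpha}{n}u_j$.

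The main obstacle is this last coefficient-matching step: both the geometric ingredient (total umbilicity of geodesic spheres in a homogeneous SH-manifold) and the analytic ingredient (the ODE for the radial profile $F$) are ultimately consequences of the heat kernel depending only on distance, but extracting them cleanly requires the rank-one isotropy structure and a careful bookkeeping of the Schur-type invariance argument.
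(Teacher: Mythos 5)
Your reduction of the statement to showing that $\mathrm{Hess}(u_j)$ is pointwise proportional to $h$ is the same as the paper's, and your identification $u_j(y)=\Xi_\alpha(\rho(m_j,y))$ is correct and is implicit in (\ref{8'})--(\ref{8''}). The gap is exactly at the step you yourself flag as ``the main obstacle,'' and it is a real one, not a bookkeeping issue. Granting umbilicity of the geodesic spheres about $m_j$ (which does hold in the two-point homogeneous examples), you have $\mathrm{Hess}(u_j)=\bigl(F''-F'\gamma\bigr)\,d\rho_j\otimes d\rho_j+F'\gamma\,h$, and conformality requires $F''=F'\gamma$. The only analytic input you invoke is the eigenfunction equation, i.e.\ the trace $F''+(n-1)\gamma F'=-\lambda_\alpha F$; this is a single scalar relation in the two unknown radial functions $F''$ and $\gamma F'$ and cannot force them to coincide. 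No combination of the ingredients you list can close this: on the round sphere $S^n$ every one of them is available (radial heat kernel, umbilical geodesic spheres, transitive isotropy), yet for eigenvalues beyond the first the zonal function $\Xi_\alpha(\rho(m_j,\cdot))$ is a Gegenbauer polynomial in $\cos\rho_j$ whose Hessian is not proportional to $h$ and whose gradient is not a conformal field. So the desired identity is not a consequence of ``radial eigenfunction on a rank-one homogeneous space''; any proof must use the kernel identity $\sum_i\varphi_i^\alpha(x)\varphi_i^\alpha(y)=\Xi_\alpha(\rho(x,y))$ in a genuinely tensorial way.

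That is what the paper's argument supplies and yours does not. It differentiates the kernel identity twice in $x$ with $y$ fixed and evaluates at $x=y$, where $\mathrm{Hess}_x\bigl(\Xi_\alpha(\rho(\cdot,y))\bigr)(y)=\Xi_\alpha''(0)\,h$, and combines this with the diagonal identity $\sum_i d\varphi_i^\alpha\otimes d\varphi_i^\alpha+\sum_i\varphi_i^\alpha\,\mathrm{Hess}(\varphi_i^\alpha)=0$ to get the full tensor equation (\ref{13}); the trace is used only to evaluate the constant $\Xi_\alpha''(0)=-\tfrac{\lambda_\alpha}{n}\Xi_\alpha(0)$. It then still has to transfer (\ref{13}), which holds for the combination weighted by $\varphi_i^\alpha(y)$ at the point $y$, to the combination weighted by $\varphi_i^\alpha(m_j)$ at a general point $y$; this is done by a homogeneity/isometry pull-back. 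Relative to that scheme your sketch is missing both pieces: (a) the tensorial, not merely traced, second derivative of $\Xi_\alpha(\rho(x,y))$ at the diagonal, and (b) the passage from the diagonal to the base point $m_j$. Your Schur-type umbilicity argument substitutes for neither, so the proposal does not establish the lemma.
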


\begin{proof}
Let $X,$ $Y$ any orthogonal vector fields on $N^{n}$ i.e. $h(X,Y)=0$ and $%
Z=\nabla u_{j}$. We have to show that the Lie derivative $L_{Z}(h$ ) of the
tensor metric $h$ with respect to $Z$ satisfies 
\begin{equation*}
(L_{Z}h)(X,Y)=0\text{.}
\end{equation*}%
But 
\begin{equation*}
(L_{Z}h)(X,Y)=h(\nabla _{X}Z,Y)+h(\nabla _{Y}Z,X)
\end{equation*}

\begin{equation}
=2h(\nabla _{X}Z,Y)=2Hess(u_{j})(X,Y)  \label{9}
\end{equation}%
and since the manifold $(N^{n},h)$ is $SH$,\ there is a function $\Xi
_{\alpha }$: $R_{+}\rightarrow R$ with 
\begin{equation}
\sum_{i=1}^{N_{\alpha }}\varphi _{i}^{\alpha }(x)\varphi _{i}^{\alpha
}(y)=\Xi _{\alpha }(\rho (x,y))  \label{10}
\end{equation}%
for every $x,y$ in $N^{n}$.

Putting $x=y$ in (\ref{10}) and differentiating twice we get 
\begin{equation}
\sum_{i=1}^{N_{\alpha }}\varphi _{i}^{\alpha }(y)d\varphi _{i}^{\alpha }(y)=0%
\text{, }\sum_{i=1}^{N_{\alpha }}d\varphi _{i}^{\alpha }(y)\otimes d\varphi
_{i}^{\alpha }(y)+\sum_{i=1}^{N_{\alpha }}\varphi _{i}^{\alpha
}(y)Hess(\varphi _{i}^{\alpha })(y)=0\text{.}  \label{11}
\end{equation}%
Now differentiating (\ref{10}) twice with respect to $x$ with $y$ fixed we
obtain 
\begin{equation}
\sum_{i=1}^{N_{\alpha }}\varphi _{i}^{\alpha }(y)Hess(\varphi _{i}^{\alpha
})(x)=Hess(\Xi _{\alpha }(\rho (,y))(x)  \label{12}
\end{equation}%
and evaluating at $x=y$ \ we get 
\begin{equation*}
Hess(\Xi (\rho (.,y))(y)=\Xi _{\alpha }^{\prime \prime }(0)h.
\end{equation*}%
By taking account of (\ref{11}), we obtain 
\begin{equation*}
\sum_{i=1}^{N_{\alpha }}d\varphi _{i}^{\alpha }(x)\otimes d\varphi
_{i}^{\alpha }(x)=-\Xi _{\alpha }^{\prime \prime }(0)h.
\end{equation*}%
To compute $\Xi _{\alpha }^{\prime \prime }(0)$, we take the traces in (\ref%
{12}) and infer that 
\begin{equation*}
\Xi _{\alpha }^{\prime \prime }(0)(y)n=\sum_{i=1}^{N_{\alpha }}\varphi
_{i}^{\alpha }(y)\Delta \varphi _{i}^{\alpha }(y)=-\lambda _{\alpha
}\sum_{i=1}^{N_{\alpha }}(\varphi _{i}^{\alpha }(y))^{2}
\end{equation*}%
\begin{equation*}
=-\lambda _{\alpha }\Xi _{\alpha }(\rho (y,y))=-\lambda _{\alpha }\Xi
_{\alpha }(0)
\end{equation*}%
where $n$ is the dimension of the manifold $N^{n}$.

So 
\begin{equation*}
\Xi _{\alpha }^{\prime \prime }(0)(y)=-\frac{\lambda _{\alpha }}{n}\Xi
_{\alpha }(\rho (y,y))
\end{equation*}%
and 
\begin{equation*}
\sum_{i=1}^{N_{\alpha }}d\varphi _{i}^{\alpha }\otimes d\varphi _{i}^{\alpha
}=\frac{\lambda _{\alpha }}{n}\Xi _{\alpha }(0)h.
\end{equation*}%
By (\ref{11}), we get 
\begin{equation}
\sum_{i=1}^{N_{\alpha }}\varphi _{i}^{\alpha }(y)\left( Hess(\varphi
_{i}^{\alpha })(y)+\frac{\lambda _{\alpha }}{n}\varphi _{i}^{\alpha
}(y)h\right) =0\text{.}  \label{13}
\end{equation}

On the other hand, for the functions $u_{j}$ defined by (\ref{8"}) and for
any $y\in N^{n}$, we have 
\begin{equation*}
Hess(u_{j})(y)+\frac{\lambda _{\alpha }}{n}u_{j}(y)h=\sum_{i=1}^{N_{\alpha
}}\varphi _{i}^{\alpha }(m_{j})\left( Hess(\varphi _{i}^{\alpha })(y)+\frac{%
\lambda }{n}\varphi _{i}^{\alpha }(y)h\right)
\end{equation*}%
and by the homogeneity of the manifold $N^{n}$, for any $y\in N^{n}$ there
is an isometry $\sigma $ on $N^{n}$ such that $\sigma (y)=m_{j}$,
consequently%
\begin{equation*}
\sigma ^{\ast }\left( Hess(u_{j})\left( .\right) +\frac{\lambda _{\alpha }}{n%
}u_{j}\left( .\right) h\right) (y)=\sigma ^{\ast }\left(
\sum_{i=1}^{N_{\alpha }}\varphi _{i}^{\alpha }(m_{j})\left( Hess(\varphi
_{i}^{\alpha })(.)+\frac{\lambda _{\alpha }}{n}\varphi _{i}^{\alpha
}(.)h\right) \right) (y)
\end{equation*}%
\begin{equation*}
=\sum_{i=1}^{N_{\alpha }}\varphi _{i}^{\alpha }(m_{j})\left( Hess(\varphi
_{i}^{\alpha })(m_{j})+\frac{\lambda _{\alpha }}{n}\varphi _{i}^{\alpha
}(m_{j})h\right)
\end{equation*}%
and from the relation(\ref{13}), we get%
\begin{equation}
Hess(u_{j})(y)+\frac{\lambda _{\alpha }}{n}u_{j}(y)h=0  \label{13'}
\end{equation}%
so by the equality(\ref{9}), we deduce 
\begin{equation}
(L_{Z}h)(X,Y)=2Hess(u_{j})(X,Y)=-2(\frac{\lambda _{\alpha }}{n}u_{j})h(X,Y)
\label{14}
\end{equation}%
and since $h(X,Y)=0,$ we obtain 
\begin{equation*}
(L_{Z}h)(X,Y)=0\text{.}
\end{equation*}
\end{proof}

\section{Harmonic maps}

Let $\left\{ \frac{\partial }{\partial x_{\alpha }}\right\} _{\alpha
=1,...,m}$ be an orthonormal basis in a neighborhood of a point $x\in M^{m}$%
, we have, for the mapping $\psi $ defined by (\ref{2'}) 
\begin{equation*}
\text{trace}\nabla d\psi (x)=\sum_{\alpha =1}^{m}\left( \nabla _{\frac{%
\partial }{\partial x_{\alpha }}}^{\psi ^{-1}TN^{n}}d\psi (\frac{\partial }{%
\partial x_{\alpha }})-d\psi (\nabla _{\frac{\partial }{\partial x_{\alpha }}%
}^{M^{m}}\frac{\partial }{\partial x_{\alpha }})\right)
\end{equation*}%
\begin{equation*}
=\sum_{\alpha =1}^{m}\left( \nabla _{\frac{\partial }{\partial x_{\alpha }}%
}^{\psi ^{-1}TN^{n}}d\phi _{t_{o}}^{w}odf(\frac{\partial }{\partial
x_{\alpha }})-d\phi _{t_{o}}^{w}odf(\nabla _{\frac{\partial }{\partial
x_{\alpha }}}^{M^{m}}\frac{\partial }{\partial x_{\alpha }})\right)
\end{equation*}%
\begin{equation*}
=\sum_{\alpha =1}^{m}Hess\phi _{t_{o}}^{w}\left( df(\frac{\partial }{%
\partial x_{\alpha }}),df(\frac{\partial }{\partial x_{\alpha }})\right)
+d\phi _{t_{o}}^{w}\left( \sum_{\alpha =1}^{m}\nabla _{\frac{\partial }{%
\partial x_{\alpha }}}^{f^{-1}TN^{n}}df(\frac{\partial }{\partial x_{\alpha }%
})\right)
\end{equation*}%
\begin{equation*}
-d\phi _{t_{o}}^{w}odf\left( \sum_{\alpha =1}^{m}\nabla _{\frac{\partial }{%
\partial x_{\alpha }}}^{M^{m}}\frac{\partial }{\partial x_{\alpha }}\right)
\end{equation*}%
\begin{equation*}
=\sum_{\alpha =1}^{m}Hess\phi _{t_{o}}^{w}\left( df(\frac{\partial }{%
\partial x_{\alpha }}),df(\frac{\partial }{\partial x_{\alpha }})\right)
+d\phi _{t_{o}}^{w}\left( \tau _{g}(f)\right)
\end{equation*}%
where 
\begin{equation*}
Hess\phi _{t_{o}}^{w}(X,Y)=\nabla _{X}^{(\phi _{t_{o}}^{w})^{-1}TN^{n}}d\phi
_{t_{o}}^{w}(Y)-d\phi _{t_{o}}^{w}\left( \nabla _{X}^{N^{n}}Y\right) .
\end{equation*}%
$\;$So if $f$ is harmonic i.e. $\tau _{g}(f)=0,$ we get 
\begin{equation}
\tau _{g}(\psi )=\sum_{\alpha =1}^{m}Hess\phi _{t_{o}}^{w}\left( df(\frac{%
\partial }{\partial x_{\alpha }}),df(\frac{\partial }{\partial x_{\alpha }}%
)\right) .  \label{4}
\end{equation}

\subsection{Lower bound of the index}

Let $\Gamma (f)$ be the space of vector fields along the map $%
f:M^{m}\rightarrow N^{n}$ \ i.e. the sections of the pulled back bundle on $%
M^{m}$ induced by $f$ \ from the tangent $TN^{n}$ bundle on $N^{n}$.

The general formula of the second variation of the Energy functional in the
direction of the vector fields $w$ writes 
\begin{equation*}
\frac{d^{2}E(f_{t})}{dt^{2}}\mid _{t=0}=\int_{M^{m}}\left( \left\Vert \nabla
w\right\Vert _{f^{-1}TN}^{2}-trace_{M}\left\langle
R^{N^{n}}(df,w)w,df\right\rangle _{f^{-1}TN^{n}}\right) dv_{g}
\end{equation*}%
\begin{equation}
+\int_{M^{m}}\left\langle \nabla _{\frac{\partial }{\partial t}}\frac{%
\partial f}{\partial t},trace\nabla df\right\rangle _{f^{-1}TN^{n}}dv_{g}
\label{5}
\end{equation}%
and if $f$ \ is harmonic, we obtain 
\begin{equation}
\frac{d^{2}E(f_{t})}{dt^{2}}\mid _{t=0}=\int_{M^{m}}\left( \left\Vert \nabla
w\right\Vert _{f^{-1}TN^{n}}^{2}-trace_{M}\left\langle
R^{N^{n}}(df,w)w,df\right\rangle _{f^{-1}TN^{n}}\right) dv_{g}\text{.}
\label{6}
\end{equation}

For any vector field $w\;$on the target manifold $N^{n}$ along $f$, we
associate the following quadratic form 
\begin{equation}
Q_{f}(w)=\frac{d^{2}E_{g}(f_{t})}{dt^{2}}\mid _{t=0}  \label{7}
\end{equation}%
where $f_{t}(x)=\exp (tw)of(x)$. The Morse index of the harmonic map $f$ is
defined as the integer 
\begin{equation*}
Ind(f)=Sup\left\{ \dim F;F\subset \Gamma (f)\text{ such that }Q_{f}\text{\
is negative defined on }F\right\} \text{.}
\end{equation*}

Now let $S_{g}(f)$ be the stress-energy tensor introduced by Baird and Eells
(see $\left[ 1\right] $).%
\begin{equation}
S_{g}(f)=e_{g}(f)g-f^{\ast }h  \label{15}
\end{equation}%
For every $x\in M^{m},$ we put 
\begin{equation*}
S_{g}^{o}(f)(x)=Inf\left\{ S_{g}(f)(X,X):X\in T_{x}M^{m}\text{ and }%
g(X,X)=1\right\} .
\end{equation*}%
The tensor will be said positive at $x$ ( resp. positive definite ) if we
have $S_{g}^{o}(f)(x)\geq 0$ ( resp. $S_{g}^{o}(f)(x)>0$).

First we state the following theorem

\begin{theorem}
Let $(M^{m},g)$ be a Riemannian compact $m-$dimensional manifold and ($%
N^{n},h)\;$ be a homogenous strongly harmonic Riemannian manifold of
dimension $n\geq 3\;$with sectional curvature $K$ satisfying $K\geq \kappa
>0 $ where $\kappa $ is a positive constant. Let $f:M^{m}\rightarrow N^{n}$
\ be a non constant harmonic map. Suppose that the stress-energy tensor of $%
f\; $ is positive definite everywhere on $M^{m}$ and the nonvanishing
eigenvalue of the Laplacian operator $\lambda $ satisfies $\lambda \leq 
\frac{n^{2}}{2}\kappa .\;$Then the index of\ \ $f$ ,$\;Ind_{E}(f)\geq \dim
L, $ where $L$ is a $n+1$ -dimensional subspace of the eigenspace $%
V_{\lambda }$ corresponding to $\lambda $.
\end{theorem}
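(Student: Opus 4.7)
My plan is to exhibit an $(n+1)$-dimensional subspace of $\Gamma(f^{-1}TN^{n})$ on which the index form $Q_{f}$ is strictly negative definite; this will yield $Ind_{E}(f)\geq n+1=\dim L$. The natural candidates are the variation fields $w_{j}=(\nabla u_{j})\circ f$, $j=1,\dots,n+1$, where $u_{1},\dots,u_{n+1}$ are the eigenfunctions constructed in Section~2 that span $L$. By the lemmas of Section~2 the $\nabla u_{j}$ are already linearly independent conformal vector fields on $N^{n}$; linear independence of the $w_{j}$ in $\Gamma(f^{-1}TN^{n})$ then follows from real analyticity of Laplace eigenfunctions on the $SH$-manifold $N^{n}$, since a nonzero $\nabla u$ cannot vanish on any open neighbourhood of $f(M^{m})$.

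For an arbitrary $u=\sum_{j}a_{j}u_{j}\in L\setminus\{0\}$, set $w=(\nabla u)\circ f$. Since $u$ is again an eigenfunction satisfying the Hessian identity $Hess(u)=-\tfrac{\lambda}{n}\,u\,h$ established earlier, I can compute both pieces of the second variation formula (\ref{6}). Using $\nabla_{Y}\nabla u=-\tfrac{\lambda}{n}uY$ yields
\[
\|\nabla w\|^{2}=\tfrac{2\lambda^{2}}{n^{2}}(u\circ f)^{2}\,e(f),
\]
while the curvature lower bound $K\geq\kappa$ gives
\[
trace_{M}\langle R^{N^{n}}(df,w)w,df\rangle \;\geq\;\kappa\bigl(2e(f)|w|^{2}-|(df)^{*}w|^{2}\bigr),
\]
where $|(df)^{*}w|^{2}:=\sum_{\alpha}h(df(e_{\alpha}),w)^{2}$ in a local orthonormal frame on $M^{m}$.

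The key auxiliary identity comes from the harmonicity of $f$: the composition formula for the Laplacian, combined with $\tau_{g}(f)=0$ and $Hess(u)=-\tfrac{\lambda}{n}uh$, yields $\Delta_{M}(u\circ f)=\tfrac{2\lambda}{n}\,e(f)(u\circ f)$. Integration by parts then gives
\[
\int_{M}|(df)^{*}w|^{2}\,dv_{g}=\int_{M}|\nabla(u\circ f)|_{g}^{2}\,dv_{g}=\tfrac{2\lambda}{n}\int_{M}e(f)(u\circ f)^{2}\,dv_{g},
\]
so in particular $\int_{M}\|\nabla w\|^{2}\,dv_{g}=\tfrac{\lambda}{n}\int_{M}|(df)^{*}w|^{2}\,dv_{g}$. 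Combining the above pieces,
\[
Q_{f}(w)\;\leq\;\bigl(\tfrac{\lambda}{n}+\kappa\bigr)\int_{M}|(df)^{*}w|^{2}\,dv_{g}-2\kappa\int_{M}e(f)|w|^{2}\,dv_{g}.
\]

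The main obstacle is to bring in the stress-energy positivity and the eigenvalue hypothesis $\lambda\leq n^{2}\kappa/2$ to force $Q_{f}(w)<0$. Diagonalizing $f^{*}h$ with respect to $g$ pointwise shows $|(df)^{*}w|^{2}\leq(e(f)-S_{g}^{o}(f))|w|^{2}$, and substituting this into the last inequality produces an integrand of the form $\bigl[(\lambda/n-\kappa)e(f)-(\lambda/n+\kappa)S_{g}^{o}(f)\bigr]|w|^{2}$. This pointwise expression is manifestly negative for $\lambda\leq n\kappa$; to reach the full range $\lambda\leq n^{2}\kappa/2$, one has to trade carefully between the pointwise estimate and the integrated identity above, exploiting the conservation law $|\nabla u|^{2}+(\lambda/n)u^{2}=c$ on $N^{n}$ (a consequence of $Hess(u)=-(\lambda/n)uh$). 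Once $Q_{f}(w)<0$ is established for every nonzero $u\in L$, the form $Q_{f}$ is negative definite on the $(n+1)$-dimensional span of the $w_{j}$, which gives $Ind_{E}(f)\geq n+1=\dim L$.
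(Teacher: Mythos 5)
Your computations agree with the paper's strategy almost line for line up to the very end: the Hessian identity $Hess(u)=-\frac{\lambda }{n}u\,h$ gives $\left\Vert \nabla w\right\Vert ^{2}=\frac{2\lambda ^{2}}{n^{2}}(u\circ f)^{2}e_{g}(f)$, the sectional curvature bound controls the trace of $R^{N^{n}}$, and your pointwise estimate $\sum_{\alpha }h(df(e_{\alpha }),w)^{2}\leq (e_{g}(f)-S_{g}^{o}(f))\left\Vert w\right\Vert ^{2}$ is exactly the paper's inequality (\ref{16}). The genuine gap is the last step. Your (correct) integration by parts on $M^{m}$ yields $\int_{M}\left\Vert \nabla w\right\Vert ^{2}dv_{g}=\frac{\lambda }{n}\int_{M}|(df)^{\ast }w|^{2}dv_{g}$, and feeding this into the curvature and stress-energy estimates produces the integrand $\bigl[ (\frac{\lambda }{n}-\kappa )e_{g}(f)-(\frac{\lambda }{n}+\kappa )S_{g}^{o}(f)\bigr] \left\Vert w\right\Vert ^{2}$, which is negative only when $\lambda \leq n\kappa $. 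Since $n\geq 3$ one has $n\kappa <\frac{n^{2}}{2}\kappa $, so the stated range is not covered, and the sentence about ``trading carefully'' using the conservation law is not a proof: from $\left\Vert \nabla u\right\Vert ^{2}+\frac{\lambda }{n}u^{2}=c$ you only get $\left\Vert w\right\Vert ^{2}=c-\frac{\lambda }{n}(u\circ f)^{2}$, and substituting this into your inequality does not manufacture the missing factor $2/n$. The paper bridges precisely this point with its identity (\ref{17}), which relates $\int_{M}\left\Vert \nabla w\right\Vert ^{2}dv_{g}$ directly to $\frac{2\lambda }{n^{2}}\int_{M}e_{g}(f)\left\Vert w\right\Vert ^{2}dv_{g}$ rather than to $\int_{M}|(df)^{\ast }w|^{2}dv_{g}$; with that identity the threshold $\frac{2\lambda }{n^{2}}\leq \kappa $ falls out immediately. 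Be aware, though, that (\ref{17}) rests on the claim $\int_{M}\left\Vert \mathrm{grad}\,u_{j}\right\Vert _{h}^{2}(f(x))dv_{g}=\lambda \int_{M}u_{j}^{2}(f(x))dv_{g}$, which is an eigenvalue identity on $N^{n}$ being applied to an integral over $M^{m}$ against $dv_{g}$; your own computation shows that the honest $M$-integration by parts gives $\frac{2\lambda }{n}\int_{M}e_{g}(f)(u\circ f)^{2}dv_{g}$ instead. So you must either supply a valid derivation of an identity of type (\ref{17}) or settle for the weaker threshold $\lambda \leq n\kappa $.

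A secondary issue: your linear-independence argument for the $w_{j}$ in $\Gamma (f)$ does not work as stated. Real analyticity forbids $\nabla u$ from vanishing on an open subset of $N^{n}$, but $f(M^{m})$ need not contain one, and the critical set of an eigenfunction can be positive-dimensional (already for second eigenfunctions on the round sphere), so a nonzero $u\in L$ could in principle have $\nabla u=0$ along all of $f(M^{m})$, making $w\equiv 0$ and $Q_{f}(w)=0$. The paper is silent on this point as well, so you are not worse off than it is, but to conclude $Ind_{E}(f)\geq n+1$ you need either a genuine argument that the restrictions remain independent or to state the bound in terms of the dimension of the span of the $w_{j}$.
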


\begin{proof}
At each point $x\in M^{m}$, we denote respectively by $w^{T}(x)$ and $%
w^{\bot }(x)$ the tangential and the normal projections of $w(x)$ on the
space $df(T_{x}M)$ and $df(T_{x}M^{m})^{\bot }.$ Let $\left\{
e_{1},...,e_{m}\right\} $ be an orthonormal basis of $T_{x}M^{m}$ \ which
diagonalizes $f^{\ast }h$ such that $\left\{ df(e_{1}),...,df(e_{l})\right\} 
$ be a basis of $df(T_{x}M^{m})$. If\ $e_{g}(f)(x)\neq 0$, then at the point 
$x$ we have 
\begin{equation*}
\left\Vert w^{T}(x)\right\Vert _{h}^{2}=\sum_{i=1}^{l}\left\Vert
df\right\Vert _{h}^{-2}\left\langle w(x),df(e_{i})\right\rangle _{h}^{2}
\end{equation*}%
on the other hand we have for each $i\leq l$%
\begin{equation*}
\left\Vert df(e_{i})(x)\right\Vert
_{h}^{2}=e_{g}(f)(x)-S_{g}(f)(x)(e_{i},e_{i})
\end{equation*}%
\begin{equation*}
\leq e_{g}(f)(x)-S_{g}^{o}(f)(x).
\end{equation*}%
So we deduce that 
\begin{equation*}
\left( e_{g}(f)-S_{g}^{o}(f)\right) (x)\left\Vert w^{T}(x)\right\Vert
_{h}^{2}\geq \sum_{i=1}^{l}h\left( w(x),df(e_{i})\right) ^{2}
\end{equation*}%
and 
\begin{equation*}
\sum_{i=1}^{l}h\left( w(x),df(e_{i})\right) ^{2}-e_{g}(f)\left\Vert
w^{T}(x)\right\Vert _{h}^{2}\leq -S_{g}^{o}(f)(x)\left\Vert
w^{T}(x)\right\Vert _{h}^{2}
\end{equation*}%
then 
\begin{equation*}
\sum_{i=1}^{l}h\left( w(x),df(e_{i})\right) ^{2}-e_{g}(f)\left\Vert
w(x)\right\Vert _{h}^{2}
\end{equation*}%
\begin{equation*}
=\sum_{i=1}^{l}h\left( w(x),df(e_{i})\right) ^{2}-e_{g}(f)\left\Vert
w^{T}(x)\right\Vert _{h}^{2}-e_{g}(f)\left\Vert w^{\bot }(x)\right\Vert
_{h}^{2}
\end{equation*}%
\begin{equation*}
\leq -S_{g}^{o}(f)(x)\left\Vert w^{T}(x)\right\Vert
_{h}^{2}-e_{g}(f)\left\Vert w^{\bot }(x)\right\Vert _{h}^{2}
\end{equation*}%
\begin{equation*}
\leq -S_{g}^{o}(f)(x)\left\Vert w(x)\right\Vert _{h}^{2}.
\end{equation*}%
Consequently 
\begin{equation}
\sum_{i=1}^{l}h\left( w(x),df(e_{i})\right) ^{2}-e_{g}(f)\left\Vert
w(x)\right\Vert _{h}^{2}\leq -S_{g}^{o}(f)(x)\left\Vert w(x)\right\Vert
_{h}^{2}.  \label{16}
\end{equation}%
Now we let 
\begin{equation*}
w_{j}(x)=-gradu_{j}(f(x))
\end{equation*}%
where $u_{j}=\sum_{i=1}^{N_{\alpha }}$ $\varphi _{i}^{\alpha }(m_{j})\varphi
_{i}^{\alpha }$ is an eigenfunction of the Laplacian operator on $N^{n}$
defined previously by (\ref{8''}). Then 
\begin{equation*}
\left\Vert df\right\Vert _{f^{-1}\NEG{T}N^{n}}^{2}\left\Vert
gradu_{j}\right\Vert _{f^{-1}\NEG{T}N^{n}}^{2}-\sum_{i=1}^{l}\left\langle
df(e_{i}),gradu_{j}\right\rangle _{f^{-1}\NEG{T}N^{n}}^{2}
\end{equation*}%
\begin{equation*}
\geq S_{g}^{o}(f)\left\Vert gradu_{j}\right\Vert _{f^{-1}\NEG%
{T}N^{n}}^{2}+e_{g}(f)\left\Vert gradu_{j}\right\Vert _{f^{-1}\NEG%
{T}N^{n}}^{2}.
\end{equation*}%
So, we have 
\begin{equation*}
\left\Vert \nabla gradu_{j}\right\Vert
_{f^{-1}TN^{n}}^{2}-\sum_{i=1}^{l}\left\langle
R^{N^{n}}(df(e_{i}),gradu_{j})df(e_{i}),gradu_{j}\right\rangle _{f^{-1}\NEG%
{T}N^{n}}\leq
\end{equation*}%
\begin{equation*}
\left\Vert \nabla gradu_{j}\right\Vert _{f^{-1}TN^{n}}^{2}-\kappa
\sum_{i=1}^{l}\left( \left\Vert df(e_{i})(x)\right\Vert _{h}^{2}\left\Vert
gradu_{j}\right\Vert _{f^{-1}\NEG{T}N^{n}}^{2}-\left\langle
df(e_{i}),gradu_{j}\right\rangle _{f^{-1}\NEG{T}N^{n}}^{2}\right) \leq
\end{equation*}%
\begin{equation*}
\left\Vert \nabla gradu_{j}\right\Vert _{f^{-1}TN^{n}}^{2}-\kappa \left(
e_{g}(f)\left\Vert gradu_{j}\right\Vert _{f^{-1}\NEG%
{T}N^{n}}^{2}+S_{g}^{o}(f)\left\Vert gradu_{j}\right\Vert _{f^{-1}\NEG%
{T}N^{n}}^{2}\right) .
\end{equation*}%
Since 
\begin{equation*}
\left\Vert \nabla gradu_{j}\right\Vert _{f^{-1}TN^{n}}^{2}=g^{\alpha \beta
}\left\langle \nabla _{\frac{\partial }{\partial x_{\alpha }}%
}gradu_{j},\nabla _{\frac{\partial }{\partial x_{\beta }}}gradu_{j}\right%
\rangle _{f^{-1}\NEG{T}N^{n}}
\end{equation*}%
\begin{equation*}
=g^{\alpha \beta }\frac{\partial f^{i}}{\partial x_{\alpha }}\frac{\partial
f^{k}}{\partial x_{\beta }}\left\langle \nabla _{\frac{\partial }{\partial
f^{i}}}gradu_{j},\nabla _{\frac{\partial }{\partial f^{k}}%
}gradu_{j}\right\rangle _{h}
\end{equation*}%
\begin{equation*}
=g^{\alpha \beta }\frac{\partial f^{i}}{\partial x_{\alpha }}\frac{\partial
f^{k}}{\partial x_{\beta }}hess(u_{j}(f(x)))\left\langle \frac{\partial }{%
\partial f^{i}},\nabla _{\frac{\partial }{\partial f^{k}}}gradu_{j}\right%
\rangle _{h}
\end{equation*}%
where the Einstein convention summation is used,and taking account of the
formula(\ref{13'})%
\begin{equation*}
=-g^{\alpha \beta }\frac{\partial f^{i}}{\partial x_{\alpha }}\frac{\partial
f^{k}}{\partial x_{\beta }}\frac{\lambda }{n}u_{j}(f(x))\left\langle \frac{%
\partial }{\partial f^{i}},\nabla _{\frac{\partial }{\partial f^{k}}%
}gradu_{j}\right\rangle _{h}
\end{equation*}%
\begin{equation*}
=g^{\alpha \beta }\frac{\partial f^{i}}{\partial x_{\alpha }}\frac{\partial
f^{k}}{\partial x_{\beta }}\frac{\lambda ^{2}}{n^{2}}u_{j}^{2}h_{ik}=2\frac{%
\lambda ^{2}}{n^{2}}e_{g}(f)u_{j}^{2}.
\end{equation*}%
On the other hand, we get from formula (\ref{11}) 
\begin{equation*}
\int_{M^{m}}\left\Vert gradu_{j}\right\Vert
_{f^{-1}TN^{n}}^{2}dv_{g}=\int_{M^{m}}\left\Vert du_{j}\right\Vert
_{f^{-1}TN^{n}}^{2}dv_{g}=\int_{M^{m}}trace(du_{j}\otimes du_{j})(f(x))dv_{g}
\end{equation*}%
\begin{equation*}
=\lambda \int_{M^{m}}u_{j}^{2}(f(x))dv_{g}
\end{equation*}%
so 
\begin{equation}
\int_{M^{m}}\left\Vert \nabla gradu_{j}\right\Vert
_{f^{-1}TN^{n}}^{2}dv_{g}=2\frac{\lambda }{n^{2}}\int_{M^{m}}e_{g}(f)\left%
\Vert gradu_{j}\right\Vert _{h}^{2}dv_{g}\text{.}  \label{17}
\end{equation}%
Consequently 
\begin{equation*}
\int_{M^{m}}\left( \left\Vert \nabla gradu_{j}(f(x))\right\Vert
_{h}^{2}-\sum_{i=1}^{m}\left\langle
R^{N^{n}}(df(e_{i}),gradu_{j})df(e_{i}),gradu_{j}(f(x))\right\rangle
_{h}\right) dv_{g}\leq
\end{equation*}%
\begin{equation*}
\int_{M^{m}}\left( 2\frac{\lambda }{n^{2}}e_{g}(f)\left\Vert
gradu_{j}(f(x))\right\Vert _{h}^{2}-\kappa (e_{g}(f)\left\Vert
gradu_{j}(f(x))\right\Vert _{h}^{2}+S_{g}^{o}(f)\left\Vert
gradu_{j}(f(x))\right\Vert _{h}^{2})\right) dv_{g}=
\end{equation*}%
\begin{equation*}
(2\frac{\lambda }{n^{2}}-\kappa )\int_{M^{m}}e_{g}(f)\left\Vert
gradu_{j}(f(x))\right\Vert _{h}^{2}dv_{g}-\kappa
\int_{M^{m}}S_{g}^{o}(f)\left\Vert gradu_{j}(f(x))\right\Vert _{h}^{2}dv_{g}
\end{equation*}%
Hence 
\begin{equation*}
\frac{d^{2}E(f_{t})}{dt^{2}}\mid _{t=0}\leq
\end{equation*}%
\begin{equation}
\int_{M^{m}}e_{g}(f)(2\frac{\lambda }{n^{2}}-\kappa )\left\Vert
gradu_{j}(f(x))\right\Vert _{h}^{2}dv_{g}-\kappa
\int_{M^{m}}S_{g}^{o}(f)(x)\left\Vert gradu_{j}(f(x))\right\Vert
_{h}^{2}dv_{g}  \label{18}
\end{equation}%
which shows that if the eigenvalue $\lambda $ of the Laplacian $\Delta
_{N^{n}}$ operator on $N^{n}$ satisfies 
\begin{equation*}
\lambda \leq \frac{n^{2}\kappa }{2}\;
\end{equation*}%
then 
\begin{equation*}
\frac{d^{2}E(f_{t})}{dt^{2}}\mid _{t=0}\leq -\kappa
\int_{M^{m}}S_{g}^{o}(f)(x)\left\Vert gradu_{j}(f(x))\right\Vert
_{h}^{2}dv_{g}
\end{equation*}%
so 
\begin{equation*}
Ind_{E}(f)\geq \dim L
\end{equation*}%
where $L$ is the $n+1$ -dimensional subspace spanned by the gradient vector
fields $gradu_{j}$, $j=1,...,n+1$.
\end{proof}

\section{Harmonic map as a global maximum}

In this section, we prove the following global theorem

\begin{theorem}
\label{tg2} Let $(M^{m},g)$ be a compact Riemannian $m-$dimensional
manifold, $(N^{n},h)$ be a homogenous strongly harmonic Riemannian manifold
of dimension $n\geq 3$. If the stress-energy tensor of the harmonic map $%
f:M^{m}\rightarrow N^{n}\;$ is positive everywhere on $M^{m}$, then the map $%
f$ is a global maximum of the energy functional on the $n+1$- dimensional
subspace $L$ of the eigenspace $V_{\lambda }$ corresponding to a
nonvanishing eigenvalue $\lambda $ of the Laplacian operator on the target
manifold.
\end{theorem}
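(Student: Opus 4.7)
The plan is to evaluate $E(f_t)$ explicitly for the variation $f_t = \exp(tw)\circ f$ with $w = -\nabla u$, $u \in L$, and show $E(f_t)\le E(f)$ for every $t\in\mathbb{R}$. First I would extend the conformality lemma of the previous section from the individual generators $\nabla u_j$ to any $u\in L$: since the identity $\operatorname{Hess}(u_j) = -\tfrac{\lambda}{n} u_j\,h$ is linear in $u_j$ and $L = \mathrm{span}\{u_1,\dots,u_{n+1}\}$, the same identity holds for every $u\in L$. Hence $w = -\nabla u$ is a conformal vector field with $L_w h = \tfrac{2\lambda}{n}u\,h$, and its flow $\phi_t := \exp(tw)$ is a one-parameter family of conformal diffeomorphisms of $N^n$ satisfying $\phi_t^{\ast}h = \mu_t^2\,h$ for some positive function $\mu_t$ with $\mu_0\equiv 1$.

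Next, since $df_t = d\phi_t\circ df$ and $d\phi_t$ rescales lengths by the conformal factor $\mu_t(f(x))$, the energy density satisfies $e_g(f_t)(x) = \mu_t^2(f(x))\,e_g(f)(x)$, whence
$$
E(f_t) \;=\; \int_{M^m} \mu_t^2(f(x))\,e_g(f)(x)\,dv_g.
$$
The theorem is therefore equivalent to the global inequality
$$
\int_{M^m}\bigl(\mu_t^2(f(x))-1\bigr)\,e_g(f)(x)\,dv_g \;\le\; 0 \qquad \text{for every } t\in\mathbb{R}. \qquad (\ast)
$$
Two pieces of information are at hand: harmonicity of $f$ yields $\int_M u(f)\,e_g(f)\,dv_g = 0$ for every $u\in L$ (vanishing of the first variation), which kills the leading-order term in $t$; and $S_g^o(f)\ge 0$ translates to the pointwise comparison $f^{\ast}h \le e_g(f)\,g$ of symmetric tensors on $M$.

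The main obstacle is upgrading these infinitesimal and pointwise facts to the global bound $(\ast)$. My plan is to integrate the ODE $\partial_t\mu_t^2 = \tfrac{2\lambda}{n}(u\circ\phi_t)\,\mu_t^2$ to the explicit representation $\mu_t^2 = \exp\!\bigl(\tfrac{2\lambda}{n}\int_0^t u\circ\phi_s\,ds\bigr)$, and then combine the conformal-volume identity $\int_{N^n}\mu_t^n\,dv_h = \int_{N^n}\phi_t^{\ast}dv_h = \mathrm{vol}(N)$ with H\"older's inequality (exponents $n/2$ and $n/(n-2)$, valid for $n\ge 3$) to obtain $\int_{N^n}\mu_t^2\,dv_h \le \mathrm{vol}(N)$. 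Positivity of the stress-energy tensor should then allow one to transfer this bound on $N^n$ to the pushforward measure $f_{\ast}(e_g(f)\,dv_g)$ on $N^n$ via the inequality $f^{\ast}h \le e_g(f)\,g$, yielding $(\ast)$. This transfer is the delicate step where the hypothesis $S_g(f)\ge 0$ does the essential work.
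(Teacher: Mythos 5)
Your setup is correct and in places cleaner than the paper's: the identity $e_g(f_t)=\mu_t^2(f(x))\,e_g(f)$ is right, the extension of $\operatorname{Hess}(u)=-\tfrac{\lambda}{n}u\,h$ from the generators $u_j$ to all of $L$ by linearity is a point the paper glosses over, and the balancing condition $\int_M u(f)\,e_g(f)\,dv_g=0$ for $u\in L$ is a correct consequence of harmonicity. The gap is the ``transfer'' step, and it is not a technicality: the inequality $\int_{N^n}\mu_t^2\,dv_h\le \mathrm{vol}(N)$ is an inequality against the Riemannian measure $dv_h$ on $N^n$, whereas $(\ast)$ is the same inequality against the pushforward measure $f_{\ast}(e_g(f)\,dv_g)$, and these two measures are in general mutually singular (for $m<n$ the image $f(M)$ is $dv_h$-null). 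Knowing that $\mu_t^2-1$ integrates to something nonpositive against $dv_h$ says nothing about its integral against a measure concentrated on $f(M)$. Moreover $S_g(f)\ge 0$ cannot repair this: it is a pointwise condition on $M$ comparing $f^{\ast}h$ with $e_g(f)\,g$, and it places no constraint on where $f(M)$ sits in $N^n$ relative to the sets $\{\mu_t>1\}$ and $\{\mu_t<1\}$. The balancing condition kills only the first-order term in $t$; already at second order you need $\int_M\bigl(\|\nabla u\|^2\circ f-\tfrac{2\lambda}{n}(u\circ f)^2\bigr)e_g(f)\,dv_g\ge 0$, which the Hölder bound on $N^n$ does not see and which is exactly where the stress-energy hypothesis must enter pointwise.

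The paper's mechanism, which your plan omits, is to differentiate $E(f_t)$ at an arbitrary $t_0$, compute $\tau_g(\phi_{t_0}^w\circ f)$ via the conformal change of the Levi--Civita connection, use the Hessian identity to write $\langle\nabla\sigma\circ f,\,w\circ f\rangle=\tfrac{\lambda}{n}\bigl(u(\phi_{t_0}^w(f(x)))-u(f(x))\bigr)$, and then combine the pointwise inequality $\sum_\alpha\langle df(e_\alpha),w\rangle^2-e_g(f)\|w\|^2\le -S_g^o(f)\|w\|^2$ (this is where positivity of the stress-energy tensor does its work, on $M$, not on $N$) with the monotonicity of $t\mapsto u(\phi_t^w(f(x)))$ along the negative gradient flow to conclude $\tfrac{d}{dt}E(f_t)\big|_{t=t_0}\le 0$ for $t_0\ge 0$. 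If you want to keep your explicit representation $E(f_t)=\int_M\mu_t^2(f)\,e_g(f)\,dv_g$, you would still have to differentiate it and reproduce this pointwise argument for $G'(t)=\tfrac{2\lambda}{n}\int_M(u\circ\phi_t\circ f)\,\mu_t^2(f)\,e_g(f)\,dv_g$; the conformal-volume inequality on $N^n$ is not a substitute.
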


\begin{proof}
Let $\widetilde{\nabla }^{N}$ be the Levi-Civita connection on $\left(
N^{n},(\phi _{t_{o}}^{w})^{\ast }h\right) ,$ and $\left\{ \frac{\partial }{%
\partial x_{\alpha }}\right\} _{\alpha =1,...,m}$ be an orthonormal basis in
a neighborhood of a point $x\in M^{m}$ which diagonalizes $f^{\ast }h$ and
such that $(\;df(\frac{\partial }{\partial x_{1}}),...,df(\frac{\partial }{%
\partial x_{l}}))$ is a basis of $df(T_{x}M))$. Then 
\begin{equation*}
d\phi _{t_{o}}^{w}(\widetilde{\nabla }_{X}^{N^{n}}Y)=\nabla _{d\phi
_{t_{o}}^{w}(X)}^{N^{n}}d\phi _{t_{o}}^{w}(Y)
\end{equation*}%
where for simplicity $w=w_{j}$ defined previously as minus the gradient of
the eigenfunction $u_{j}$ given by (\ref{8''}), so 
\begin{equation*}
\tau _{g}(\psi )=\sum_{\alpha =1}^{m}Hess\phi _{t_{o}}^{w}\left( df(\frac{%
\partial }{\partial x_{\alpha }}),df(\frac{\partial }{\partial x_{\alpha }}%
)\right)
\end{equation*}%
\begin{equation*}
=\sum_{\alpha =1}^{m}d\phi _{t_{o}}^{w}\left( \widetilde{\nabla }_{\frac{%
\partial }{\partial x_{\alpha }}}^{f^{-1}TN^{n}}df(\frac{\partial }{\partial
x_{\alpha }})-\nabla _{\frac{\partial }{\partial x_{\alpha }}%
}^{f^{-1}TN^{n}}df(\frac{\partial }{\partial x_{\alpha }})\right) \text{.}
\end{equation*}%
Since $w$ is a conformal infinitesimal transformation i.e. $(\phi
_{t_{o}}^{w})^{\ast }h=e^{2\sigma }h$, where $\sigma $ is some function on
the manifold $N^{n}$, the conformal change of connections is then given by:
for any vector field $X$, $Y$ on the manifold $N^{n}$ 
\begin{equation*}
\widetilde{\nabla }_{X}Y-\nabla _{X}Y=X(\sigma )Y+Y(\sigma )X-\left\langle
X,Y\right\rangle _{h}\nabla \sigma
\end{equation*}%
where $\;\left\langle .,.\right\rangle _{h}=h\;$and\ $\nabla \sigma $ is the
gradient vector field of the function $\sigma .\;$Hence 
\begin{equation*}
\tau _{g}(\psi )=d\phi _{t_{o}}^{w}\left( 2\sum_{\alpha =1}^{m}\left\langle
\nabla \sigma of,df(\frac{\partial }{\partial x_{\alpha }})\right\rangle
_{h}df(\frac{\partial }{\partial x_{\alpha }})-\sum_{\alpha
=1}^{m}\left\langle df(\frac{\partial }{\partial x_{\alpha }}),df(\frac{%
\partial }{\partial x_{\alpha }})\right\rangle _{h}\nabla \sigma of\right)
\end{equation*}%
\begin{equation*}
=2d\phi _{t_{o}}^{w}\left( \sum_{\alpha =1}^{m}\left\langle \nabla \sigma
of,df(\frac{\partial }{\partial x_{\alpha }})\right\rangle _{h}df(\frac{%
\partial }{\partial x_{\alpha }})-e_{g}(f).\nabla \sigma of\right)
\end{equation*}%
where $\psi =\phi _{t_{o}}^{w}of$.

Consequently%
\begin{equation*}
\int_{M}\left\langle \tau _{g}(\psi ),wo\psi \right\rangle _{h}dv_{g}=
\end{equation*}%
\begin{equation*}
=2\int_{M}(\phi _{t_{o}}^{w})^{\ast }\left( \sum_{\alpha =1}^{m}\left\langle
\nabla \sigma of,df(\frac{\partial }{\partial x_{\alpha }})\right\rangle
_{h}\left\langle df(\frac{\partial }{\partial x_{\alpha }},wof\right\rangle
_{h}-e_{g}(f)\left\langle \nabla \sigma of,w\right\rangle _{h}\right) dv_{g}
\end{equation*}%
\begin{equation*}
=2\int_{M}e^{2\sigma of(x)}\left\{ \sum_{\alpha =1}^{m}\left\langle \nabla
\sigma of,df(\frac{\partial }{\partial x_{\alpha }})\right\rangle
_{h}\left\langle df(\frac{\partial }{\partial x_{\alpha }}),wof\right\rangle
_{h}-e_{g}(f)\left\langle \nabla \sigma of,wof\right\rangle _{h}\right\}
dv_{g}
\end{equation*}%
Now we compute the gradient of the function $\sigma $ to get 
\begin{equation*}
\left\langle \nabla \sigma of,wof\right\rangle _{h}(x)=\frac{\left\langle
\nabla _{wof}w(\phi _{t_{0}}^{w}(f(x)),w(\phi
_{t_{0}}^{w}(f(x))\right\rangle _{h}}{\left\Vert w(\phi
_{t_{0}}^{w}(f(x))\right\Vert _{h}^{2}}-\frac{\left\langle \nabla
_{wof}w(f(x)),w(f(x))\right\rangle _{h}}{\left\Vert w(f(x))\right\Vert
_{h}^{2}}
\end{equation*}%
\begin{equation*}
=-\frac{Hess(u(\phi _{t_{0}}^{w}(f(x)))\left\langle w(\phi
_{t_{0}}^{w}(f(x)),w(\phi _{t_{0}}^{w}(f(x))\right\rangle _{h}}{\left\Vert
w(\phi _{t_{0}}^{w}(f(x))\right\Vert _{h}^{2}}+\frac{Hess(u(f(x)))\left%
\langle w(f(x)),w(f(x))\right\rangle _{h}}{\left\Vert w(f(x))\right\Vert
_{h}^{2}}.
\end{equation*}%
Using the inequality(\ref{13'}), we get that 
\begin{equation*}
\left\langle \nabla \sigma of,wof\right\rangle _{h}(x)=\frac{\lambda }{n}%
\left( u(\phi _{t_{0}}^{w}(f(x))-u((f(x))\right)
\end{equation*}%
and also, we have%
\begin{equation*}
\left\langle \nabla \sigma of,df(\frac{\partial }{\partial x_{\alpha }}%
)\right\rangle _{h}(x)=
\end{equation*}%
\begin{equation*}
-\frac{Hess(u(\phi _{t_{0}}^{w}(f(x)))\left\langle \left( \phi
_{t_{0}}^{w}\right) _{\ast }df(\frac{\partial }{\partial x_{\alpha }}%
),w(\phi _{t_{0}}^{w}(f(x))\right\rangle _{h}}{\left\Vert w(\phi
_{t_{0}}^{w}(f(x))\right\Vert _{h}^{2}}+\frac{Hess(u(f(x)))\left\langle df(%
\frac{\partial }{\partial x_{\alpha }}),w(f(x))\right\rangle _{h}}{%
\left\Vert w(f(x))\right\Vert _{h}^{2}}
\end{equation*}%
\begin{equation*}
=\frac{\lambda }{n}\left( u(\phi _{t_{0}}^{w}(f(x))\frac{\left\langle \left(
\phi _{t_{0}}^{w}\right) _{\ast }df(\frac{\partial }{\partial x_{\alpha }}%
),w(\phi _{t_{0}}^{w}(f(x))\right\rangle _{h}}{\left\Vert w(\phi
_{t_{0}}^{w}(f(x))\right\Vert _{h}^{2}}-u((f(x))\frac{\left\langle df(\frac{%
\partial }{\partial x_{\alpha }}),w(f(x))\right\rangle _{h}}{\left\Vert
w(f(x))\right\Vert _{h}^{2}}\right)
\end{equation*}%
\begin{equation*}
=\frac{\lambda }{n}\left( u(\phi _{t_{0}}^{w}(f(x))-u((f(x))\right) \frac{%
\left\langle df(\frac{\partial }{\partial x_{\alpha }}),w(f(x))\right\rangle
_{h}}{\left\Vert w(f(x))\right\Vert _{h}^{2}}
\end{equation*}%
consequently%
\begin{equation*}
\left\langle \nabla \sigma of,df(\frac{\partial }{\partial x_{\alpha }}%
)\right\rangle _{h}(x)\left\langle df(\frac{\partial }{\partial x_{\alpha }}%
),wof\right\rangle _{h}(x)=\frac{\lambda }{n}\left( u(\phi
_{t_{0}}^{w}(f(x))-u((f(x))\right) \frac{\left\langle df(\frac{\partial }{%
\partial x_{\alpha }}),w(f(x))\right\rangle _{h}^{2}}{\left\Vert
w(f(x))\right\Vert _{h}^{2}}
\end{equation*}%
and%
\begin{equation*}
\sum_{\alpha =1}^{m}\left\langle \nabla \sigma of,df(\frac{\partial }{%
\partial x_{\alpha }})\right\rangle _{h}(x)\left\langle df(\frac{\partial }{%
\partial x_{\alpha }}),wof\right\rangle _{h}(x)-e_{g}(f)\left\langle \nabla
\sigma of,wof\right\rangle _{h}(x)=
\end{equation*}%
\begin{equation*}
=\frac{\lambda }{n}\frac{\left( u(\phi _{t_{0}}^{w}(f(x))-u((f(x))\right) }{%
\left\Vert w(f(x))\right\Vert _{h}^{2}}\left( \sum_{\alpha
=1}^{m}\left\langle df(\frac{\partial }{\partial x_{\alpha }}%
),w(f(x))\right\rangle _{h}^{2}-e_{g}(f)\left\Vert w(f(x))\right\Vert
_{h}^{2}\right) .
\end{equation*}%
Now, if at each point $x\in M^{m}$, $w^{T}(x)$ and $w^{\bot }(x)$ denote the
tangential and the normal projections of $w(x)$ on the space $df(T_{x}M)$
and $df(T_{x}M^{m})^{\bot }$ and $S_{g}^{o}(f)$ is the stress-energy tensor
of $f$, we have%
\begin{equation*}
\left( e_{g}(f)-S_{g}^{o}(f)\right) \left\Vert w^{T}(f(x))\right\Vert
_{h}^{2}=\left( e_{g}(f)-S_{g}^{o}(f)\right) \sum_{\alpha
=1}^{l}\left\langle df(\frac{\partial }{\partial x_{\alpha }}),\frac{w(f(x))%
}{\left\Vert df(\frac{\partial }{\partial x_{\alpha }})\right\Vert _{h}}%
\right\rangle _{h}^{2}
\end{equation*}%
\begin{equation*}
\geq \sum_{\alpha =1}^{l}\left\langle df(\frac{\partial }{\partial x_{\alpha
}}),w(f(x))\right\rangle _{h}^{2}
\end{equation*}%
so%
\begin{equation*}
\sum_{\alpha =1}^{m}\left\langle df(\frac{\partial }{\partial x_{\alpha }}%
),w(f(x))\right\rangle _{h}^{2}-e_{g}(f)\left\Vert w(f(x))\right\Vert
_{h}^{2}\leq
\end{equation*}%
\begin{equation*}
\leq \sum_{\alpha =l+1}^{m}\left\langle df(\frac{\partial }{\partial
x_{\alpha }}),w(f(x))\right\rangle _{h}^{2}-S_{g}^{o}(f)\left\Vert
w^{T}(f(x))\right\Vert _{h}^{2}-e_{g}(f)\left\Vert w^{\bot
}(f(x))\right\Vert _{h}^{2}
\end{equation*}%
\begin{equation*}
\leq \sum_{\alpha =l+1}^{m}\left\langle df(\frac{\partial }{\partial
x_{\alpha }}),w(f(x))\right\rangle _{h}^{2}+\left(
e_{g}(f)-S_{g}^{o}(f)\right) \left\Vert w^{T}(f(x))\right\Vert
_{h}^{2}-e_{g}(f)\left\Vert w(f(x))\right\Vert _{h}^{2}
\end{equation*}%
and since $S_{g}^{o}(f)\leq e_{g}(f)$, we obtain 
\begin{equation*}
\sum_{\alpha =1}^{m}\left\langle df(\frac{\partial }{\partial x_{\alpha }}%
),w(f(x))\right\rangle _{h}^{2}-e_{g}(f)\left\Vert w(f(x))\right\Vert
_{h}^{2}\leq -S_{g}^{o}(f)\left\Vert w(f(x))\right\Vert _{h}^{2}
\end{equation*}

and since the vector field $w$ is minus the gradient of the eigenfunction$%
\;u $ corresponding to eigenvalue $\lambda $, we get 
\begin{equation*}
\frac{d}{dt}\mid _{t=t_{o}}u(\phi _{t}^{w}(f(x))=-\left\Vert w(\phi
_{t_{0}}^{w}(f(x))\right\Vert \leq 0
\end{equation*}%
that is the function $t\rightarrow u(\phi _{t}^{w}(f(x))$ is decreasing.

Consequently%
\begin{equation*}
\sum_{\alpha =1}^{m}\left\langle \nabla \sigma of,df(\frac{\partial }{%
\partial x_{\alpha }})\right\rangle _{h}(x)\left\langle df(\frac{\partial }{%
\partial x_{\alpha }}),wof\right\rangle _{h}(x)-e_{g}(f)\left\langle \nabla
\sigma of,wof\right\rangle _{h}(x)\geq
\end{equation*}%
\begin{equation*}
-\frac{\lambda }{n}\left( u(\phi _{t_{0}}^{w}(f(x))-u((f(x))\right)
S_{g}^{o}(f)
\end{equation*}%
and finally we obtain that 
\begin{equation*}
\frac{d}{dt}E(f_{t})\mid _{t=t_{0}}=-\int_{M^{m}}\left\langle \tau _{g}(\psi
),wo\psi \right\rangle _{f^{-1}TN}dv_{g}
\end{equation*}%
\begin{equation*}
\leq \frac{\lambda }{n}\int_{M^{m}}e^{2\sigma of(x)}\left( u(\phi
_{t_{0}}^{w}(f(x))-u((f(x))\right) S_{g}^{o}(f)dv_{g}
\end{equation*}%
So if the stress-energy tensor $S_{g}^{o}(f)$ is positive, the energy
functional is decreasing that means that the harmonic map $f$ is a global
maximum of the energy functional $E(.).$
\end{proof}

Let $Isom(S^{n})$ and $Conf(S^{n})$ be respectively the isometric and the
conformal Lie group on the standard unit sphere $S^{n}$.$\;$Denote by $%
conf(S^{n})$ the Lie algebra of $Conf(S^{n})$ and let $w$ be the negative
gradient of the eigenfunction $u_{j}$ $=\sum_{i=1}^{N_{\alpha }}\varphi
_{i}(m_{j})\varphi _{i}$ corresponding to a non vanishing eigenvalue of the
Laplacian on the $S^{n}$, given by (\ref{8"}), we have

\begin{corollary}
If the target manifold is the standard unit $n$- sphere $S^{n}$ ($n\geq 3)\;$
endowed with canonical metric then for every $w\in conf(S^{n})$

\begin{equation*}
E(wof)\leq E(f)
\end{equation*}%
provided that the stress-energy tensor is positive.
\end{corollary}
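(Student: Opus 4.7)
The plan is to deduce the corollary from Theorem~\ref{tg2} by exploiting the structure of the conformal group of the round sphere.

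First I would verify the hypotheses: $S^n$ with its canonical metric is a CROSS, hence a homogeneous strongly harmonic manifold. Its first nonzero Laplace eigenvalue is $\lambda_1 = n$, with eigenspace $V_{\lambda_1}$ of dimension $n+1$ spanned by the restrictions to $S^n$ of the ambient coordinate functions $y_0,\ldots,y_n$ of $\mathbf{R}^{n+1}$. The negative gradients $-\nabla y_j$ on $S^n$ are precisely the conformal gradient vector fields, and together with the Killing fields they span $conf(S^n)$; one has the direct sum decomposition $conf(S^n) = isom(S^n) \oplus \mathcal{G}$, where $\mathcal{G} = \{\nabla u : u \in V_{\lambda_1}\}$. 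Note that $V_{\lambda_1}$ is closed under negation, so the sign of $\nabla u$ does not affect which set we are in.

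Next I would reduce the general conformal case to a single conformal-gradient flow by means of the Cartan (polar) decomposition of $Conf(S^n) \cong SO(n+1,1)^+$. Every conformal diffeomorphism $\phi = \exp(w)$ with $w \in conf(S^n)$ factors as $\phi = \sigma_1 \circ \phi_s^{-\nabla u} \circ \sigma_2$, where $\sigma_1,\sigma_2 \in Isom(S^n)$, $u \in V_{\lambda_1}$, and $s \geq 0$. Isometries of the target manifold leave the energy, the harmonicity, and the stress-energy tensor pointwise invariant; so setting $\tilde f := \sigma_2 \circ f$, the map $\tilde f$ is again harmonic with $S^o_g(\tilde f) \geq 0$, and it suffices to prove $E(\phi_s^{-\nabla u} \circ \tilde f) \leq E(\tilde f)$. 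This last inequality is exactly the content of Theorem~\ref{tg2}, which gives that $t \mapsto E(\phi_t^{-\nabla u} \circ \tilde f)$ is nonincreasing on $[0,+\infty)$; combining with $E(\tilde f) = E(f)$ yields $E(\phi \circ f) \leq E(f)$.

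I expect the main obstacle to be Step~2: one must carefully set up the polar decomposition of $Conf(S^n)$ and verify that its abelian factor is exactly the one-parameter subgroup generated by a conformal gradient field. If a purely group-theoretic argument is considered too heavy, an alternative is to redo the first-variation computation in the proof of Theorem~\ref{tg2} directly for an arbitrary conformal $w$, writing $L_w h = 2\varphi_w h$ and observing that $\varphi_w$ vanishes on the Killing summand while $\varphi_w = (\lambda_1/n) u$ on the gradient summand, so that only the gradient component contributes a (nonpositive) term to the variation; integrating along the flow $\phi_t^w$ would then recover the corollary without invoking the Cartan decomposition.
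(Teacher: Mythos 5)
Your proposal is correct and follows the same core strategy as the paper: reduce to Theorem~\ref{tg2} by identifying the conformal vector fields of $S^{n}$, modulo the Killing fields, with the gradients of first eigenfunctions ($\lambda_{1}=n$, eigenspace of dimension $n+1$), and then use invariance of the energy, harmonicity, and stress-energy tensor under target isometries. The paper's own proof is only the two-line observation that $\dim\bigl(Conf(S^{n})/Isom(S^{n})\bigr)=n+1$ together with the (literally imprecise) claim that the set of these gradients ``is nothing than $conf(S^{n})$''; your explicit Cartan factorization $\phi=\sigma_{1}\circ\phi_{s}^{-\nabla u}\circ\sigma_{2}$ supplies precisely the reduction that the paper leaves implicit, so your write-up is a more rigorous version of the intended argument rather than a different one.
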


\begin{proof}
We know that 
\begin{equation*}
\dim (Conf(S^{n})/Isom(S^{n}))=n+1.
\end{equation*}%
Since the eigenfunctions, corresponding to the first non zero eigenvalue $%
\lambda _{1}=n$ of the Laplacian operator on the standard $n$- sphere $S^{n}$%
, are the restrictions of the linear forms on the Euclidean space $R^{n+1}$
to $S^{n}$ and their gradients are conformal vector fields, it follows that
the set of these gradients is nothing than $conf(S^{n})$ and by Theorem\ref%
{tg2}, we get for every $w\in conf(S^{n})$ :$\;E(wof)\leq E(f).$
\end{proof}

\begin{remark}
In the particular case where the target manifold is the standard n- sphere,
we have $\;$ $\dim V_{\lambda }=n+1$.$\;$So we get the result in (\cite{5}).
\end{remark}

\section{Minimal Immersions}

\subsection{Morse index of minimal immersions}

With notations of the previous section, if the manifold $M^{m}$ is a minimal
submanifold of the manifold $N^{n}$, the second variation with respect to
the vector field $w$ along $f$ is given by the following integral 
\begin{equation*}
\frac{d^{2}}{dt^{2}}V(f_{t})\mid _{t=0}=
\end{equation*}%
\begin{equation*}
=\int_{M^{m}}\left( \left\Vert \nabla ^{f^{-1}TN^{n}}wof\right\Vert
_{h}^{2}-\left\Vert \sigma (wof)\right\Vert
_{h}^{2}-trace_{M^{m}}\left\langle R^{N^{n}}(df,wof)wof,df\right\rangle
_{h}\right) dv_{g}
\end{equation*}%
where 
\begin{equation*}
trace_{M^{m}}\left\langle R^{N^{n}}(df,w)w,df\right\rangle
=\sum_{i=1}^{m}\left\langle R^{N^{n}}(df(\frac{\partial }{\partial x_{i}}%
),w)w,df(\frac{\partial }{\partial x_{i}})\right\rangle _{h}
\end{equation*}%
and $\sigma (w)$ denotes the second fundamental form relative to $w.$

To the immersion $f$, we assign the quadratic form associated to the second
variation of the volume functional defined on $\Gamma (f)$ by 
\begin{equation*}
H_{f}(w)=\frac{d^{2}}{dt^{2}}V(f_{t})\mid _{t=0}
\end{equation*}%
and since this latter depends only on the normal component of elements of $%
\Gamma (f),$ we consider only the restriction of $H_{f}$ to the normal
projection $\Gamma ^{N^{n}}(f)$ of the space $\Gamma (f).$ The Morse index
of $f,$ denoted $Ind_{V}(f),$ is defined as the dimension of the maximal
subspace of $\Gamma ^{N}(f)$ on which $H_{f}$ is negative- definite.

In this section, we state the following theorem

\begin{theorem}
Let $(M^{m},g)$ be a Riemannian compact $m$- dimensional manifold, $%
(N^{n},h) $ be a homogenous strongly harmonic Riemannian manifold of
dimension $n\geq 3 $\ with sectional curvature $K$ satisfying $K\geq \kappa
>0$, where $\kappa $ is a constant, $f:M^{m}\rightarrow N^{n}$ be a minimal
isometric immersion not totally geodesic and $\lambda $ be a non vanishing
eigenvalue of the Laplacian operator on the target manifold $N^{n}$. Suppose
that $\lambda $ satisfies $\lambda \leq \frac{n^{2}}{2}\kappa $.\ Then $%
Ind_{V}(f)\geq \dim (L{}^{\bot })$ where $\ L{}^{\bot }$ denotes the normal
component of \ an $n+1$- dimensional subspace $L$ of the eigenspace $%
V_{\lambda }$ corresponding to $\lambda $. \ 
\end{theorem}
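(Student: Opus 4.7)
The plan is to mirror the proof of the harmonic-map index theorem, applied now to the second variation of the volume rather than the energy. Starting from the construction in Section~2, I would take the same $n+1$ eigenfunctions $u_1,\ldots,u_{n+1}\in V_\lambda$ and the associated gradient vector fields $w_j=-\operatorname{grad} u_j$ along $f$. Letting $v_j = w_j^{\bot}$ denote the normal component of $w_j$ in $f^{-1}TN^n$, the space $L^{\bot}$ is spanned by $\{v_1,\ldots,v_{n+1}\}$, and the goal is to show that the quadratic form $H_f$ attached to the second variation of the volume is negative definite on this span.

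Next I substitute each $v_j$ into the second variation formula stated at the start of this section and evaluate the three terms. The norm $\|\nabla^{f^{-1}TN^n} v_j\|_h^2$ is computed via the Hessian identity $\operatorname{Hess}(u_j)+\frac{\lambda}{n} u_j h = 0$ from~(\ref{13'}), yielding a quantity proportional to $\lambda^2 u_j^2/n^2$; since $f$ is isometric we have $e_g(f)=m/2$, and the calculation parallels the one leading to~(\ref{17}) in Theorem~1's proof. The second fundamental form term $-\|\sigma(v_j)\|^2$ is nonpositive and strictly negative on the open set where $f$ is not totally geodesic. For the curvature term, normality of $v_j$ forces $\langle df(e_i), v_j\rangle_h = 0$, so the sectional curvature bound $K\ge\kappa$ gives
$$\mathrm{trace}_{M^m}\langle R^{N^n}(df,v_j)v_j,df\rangle_h \ge \kappa\, m\, \|v_j\|_h^2.$$

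Combining these ingredients with the eigenvalue hypothesis $\lambda \le n^2\kappa/2$, the $\|\nabla v_j\|^2$ and Riemann-curvature contributions yield a combined coefficient $\bigl(2\lambda/n^2-\kappa\bigr)\le 0$, in direct parallel with inequality~(\ref{18}) of Theorem~1's proof, while the $-\|\sigma(v_j)\|^2$ term supplies a strictly negative contribution wherever $\sigma\neq 0$. Since $f$ is not totally geodesic by hypothesis, $\sigma$ is nonzero on a nonempty open set, and I would conclude that $H_f(v) < 0$ for every nonzero $v$ in the span, whence $\mathrm{Ind}_V(f) \ge \dim L^{\bot}$.

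The main obstacle is the interplay between the ambient covariant derivative $\|\nabla v_j\|_h^2$ that appears in the authors' volume formula and the tangential/normal decompositions that the curvature and second fundamental form terms demand. Concretely, $w_j$ will generically have a nontrivial tangential part $w_j^T$ along $f(M^m)$, which must be carefully accounted for so that only the normal component $v_j$ effectively enters the final bound. I expect one will need the identity $\Delta_{M^m}(u_j\circ f) = (\lambda m/n)(u_j\circ f)$, valid for minimal isometric immersions and obtained by tracing the chain rule for $\operatorname{Hess}(u_j\circ f)$ together with~(\ref{13'}) and the vanishing of the mean curvature, in order to relate $\int_{M^m}\|w_j^T\|_h^2\,dv_g$ to $\int_{M^m}(u_j\circ f)^2\,dv_g$ and thereby close the estimate.
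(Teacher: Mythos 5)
Your proposal follows essentially the same route as the paper's own proof: the same fields $w_j=-\mathrm{grad}\,u_j$, the same restriction to normal components, the Hessian identity (\ref{13'}) (via formula (\ref{17})) for the $\Vert\nabla\cdot\Vert^2$ term, the sectional curvature bound $K\geq\kappa$ applied to the trace term (where normality kills the cross terms), and strict negativity supplied by $-\Vert\sigma(w^{\perp})\Vert^2$ because $f$ is not totally geodesic. The tangential-part bookkeeping that you identify as the main obstacle is in fact passed over in the paper, which simply applies formula (\ref{17}) directly to $w^{\perp}\circ f$; your suggested use of $\Delta_{M^m}(u_j\circ f)$ is a reasonable way to fill that step, but it does not change the fact that the two arguments are essentially identical.
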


\begin{proof}
$H_{f}(w)$ can be written as 
\begin{equation*}
H_{f}(w)=\int_{M^{m}}\left( \left\Vert \nabla ^{f^{-1}TN^{n}}w^{\perp
}of\right\Vert _{h}^{2}-trace_{M_{m}}\left\langle R^{N^{n}}(df,w^{\perp
}of)w^{\perp }of,df\right\rangle _{h}\right) dv_{g}
\end{equation*}%
\begin{equation*}
-\int_{M}\left\Vert \sigma (w^{\perp }of)\right\Vert ^{2}dv_{g}.
\end{equation*}%
where $w=-gradu_{j}$ and $w^{\bot }$ is the orthogonal projection of $w$ on $%
df(T_{x}M^{m}).$ Now taking account of formula(\ref{17}), we obtain 
\begin{equation*}
H_{f}(w)=\int_{M}2e_{g}(f)\left( \frac{\lambda }{n^{2}}-\frac{1}{2}\kappa
\right) \left\Vert w^{\perp }of\right\Vert _{h}^{2}dv_{g}-\int_{M}\left\Vert
\sigma (w^{\perp }of)\right\Vert _{h}^{2}dv_{g}.
\end{equation*}%
so, since 
\begin{equation*}
\lambda \leq \frac{\kappa n^{2}}{2}
\end{equation*}%
and 
\begin{equation*}
\sigma (w^{\perp })\neq 0
\end{equation*}%
we obtain 
\begin{equation*}
H_{f}(w)<0.
\end{equation*}
\end{proof}

\subsection{Minimal Immersion as a global maximum}

In this subsection, we establish the following global theorem

\begin{theorem}
Let $(M^{m},g)$ be a Riemannian $m-$dimensional compact manifold, $(N^{n},h)$
be a strongly harmonic Riemannian manifold of dimension $n\geq 3$. If $%
f:M^{m}\rightarrow N^{n}\;$is a minimal isometric immersion, then $f$ is a
global maximum of the volume functional on the normal component $L^{\perp }$
of an $n+1$- dimensional subspace $L$ of the eigenspace $V_{\lambda }$
corresponding to a nonvanishing eigenvalue of the Laplacian operator $%
\lambda $ on the target manifold.
\end{theorem}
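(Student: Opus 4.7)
The plan is to mirror the proof of Theorem \ref{tg2}, replacing the energy functional by the volume functional and using the minimality of $f$ in place of the stress-energy positivity. Fix $w = -\operatorname{grad} u_j$ for one of the eigenfunctions (\ref{8''}) spanning $L$; by the lemmas of Section~2, $w$ is a conformal vector field on $N^n$ with $L_w h = \frac{2\lambda}{n} u_j h$. Form the one-parameter family $f_t = \phi_t^w \circ f$. Writing $(\phi_t^w)^*h = e^{2\sigma_t} h$, the isometry of $f$ yields $f_t^*h = e^{2\sigma_t\circ f} g$ and hence $V(f_t) = \int_M e^{m\sigma_t\circ f}\,dv_g$.

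To compute $V'(t)$ I apply the first variation formula for volume at $f_t$, namely $V'(t) = -\int_M h(H_{f_t}, w\circ f_t)\,dv_{f_t^*h}$. Since $\phi_t^w\colon (N, (\phi_t^w)^*h) \to (N, h)$ is an isometry and $f$ is minimal, the conformal transformation formula for mean curvature gives $H_{f_t}|_{\phi_t^w(y)} = -m\,d\phi_t^w\bigl((\nabla_h \sigma_t|_y)^\perp\bigr)$, where $\perp$ is the normal projection to $f(M)$. Using $w\circ f_t = d\phi_t^w(w)$ and conformality, one finds $h(H_{f_t}, w\circ f_t)|_{\phi_t^w(y)} = -m\,h(\nabla_h\sigma_t, w^\perp)|_y$, so $V'(t) = m\int_M h(\nabla_h\sigma_t, w^\perp)\,e^{m\sigma_t\circ f}\,dv_g$. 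The Hessian identity (\ref{13'}) with $w = -\operatorname{grad} u_j$, used exactly as in Theorem \ref{tg2}, yields $h(\nabla_h\sigma_t, w^\perp)|_{f(x)} = \frac{\lambda}{n}\bigl(u_j\circ\phi_t^w\circ f - u_j\circ f\bigr)(x)\,\|w^\perp\|^2/\|w\|^2$. Combining,
\[
V'(t) = \frac{m\lambda}{n}\int_M \bigl(u_j\circ\phi_t^w\circ f - u_j\circ f\bigr)\,\frac{\|w^\perp\|^2}{\|w\|^2}\,e^{m\sigma_t\circ f}\,dv_g.
\]

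Since $w = -\operatorname{grad} u_j$, the function $t \mapsto u_j(\phi_t^w(y))$ has derivative $-\|\operatorname{grad} u_j\|^2(\phi_t^w(y)) \leq 0$, so $u_j\circ\phi_t^w\circ f - u_j\circ f \leq 0$ for $t \geq 0$ and $\geq 0$ for $t \leq 0$. Together with $\|w^\perp\|^2/\|w\|^2 \geq 0$ and $e^{m\sigma_t\circ f} > 0$, this yields $V'(t) \leq 0$ for $t \geq 0$ and $V'(t) \geq 0$ for $t \leq 0$, hence $V(f_t) \leq V(f)$ for every $t$. Because only the normal part $w^\perp$ enters the integrand, the argument linearizes over combinations of the generators of $L$ and shows $f$ to be a global maximum on $L^\perp$. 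The main technical step is the conformal mean-curvature computation producing the clean factor $\|w^\perp\|^2/\|w\|^2$; once it is in hand the sign analysis follows exactly the pattern of Theorem \ref{tg2}, and in fact is simpler because no stress-energy positivity is needed.
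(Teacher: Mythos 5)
Your proposal is correct and follows essentially the same route as the paper: vary $f$ along the flow of the conformal field $w=-\operatorname{grad}u_{j}$, use the first variation of volume together with the conformal change of the mean curvature (so that minimality of $f$ leaves only the $\operatorname{grad}\sigma$ term), convert $\langle\nabla\sigma ,w^{\perp}\rangle$ via the Hessian identity (\ref{13'}) into a difference of values of $u_{j}$ along the flow, and conclude from the monotonicity of $t\mapsto u_{j}(\phi_{t}^{w}(f(x)))$. The only cosmetic differences are your explicit factor $\Vert w^{\perp}\Vert^{2}/\Vert w\Vert^{2}$ and your treatment of both signs of $t$, neither of which changes the argument.
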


\begin{proof}
Let $w=-gradu_{j},$ where $u_{j}$ is the eigenfunction of the Laplacian
operator on $N^{n}$ given in section2 by (\ref{8''}). Let $\psi =\phi
_{t}^{w}of$ where $\phi _{t}^{w}$ denotes the flow generated by the vector
field $w.$ The first variation formula reads as 
\begin{equation*}
\frac{d}{dt}V(f_{t})=-\int_{\psi (M)}\left\langle H_{f(x)}^{\psi
(M^{m})},w(f(x))\right\rangle _{h}dv_{g}
\end{equation*}%
\begin{equation*}
=-\int_{M^{m}}\left\langle H_{\psi (x)}^{\psi (M^{m})},w(\psi
(x))\right\rangle _{h}e^{m\sigma }dv_{g}
\end{equation*}%
where $H^{\psi (M^{m})}$ denotes the mean curvature of the submanifold $\psi
(M^{m})$ in $N^{n}$ . The variation depends only on the normal component $%
w^{\bot }$ of the vector field $w$. Let $\left\{ \frac{\partial }{\partial
x_{1}},...,\frac{\partial }{\partial x_{m}}\right\} $ be an orthonormal
basis at the point $x\in M,$ then 
\begin{equation*}
\left\langle H_{\psi (x)}^{\psi (M^{m})},w(\psi (x))\right\rangle
_{h}=\sum_{i=1}^{m}\left\langle \nabla _{\psi _{\ast }\frac{\partial }{%
\partial x_{i}}}^{N^{n}}\psi _{\ast }\frac{\partial }{\partial x_{i}}%
,w^{\bot }(\psi _{t}(x))\right\rangle _{h}\text{.}
\end{equation*}%
Since $\phi _{t}^{w}:(N^{n},e^{2\sigma }h)\rightarrow (N^{n},h)$ is an
isometry, it follows that 
\begin{equation*}
\left( \phi _{t}^{w}\right) ^{\ast }(\overset{\_}{\nabla }_{f_{\ast }\frac{%
\partial }{\partial x_{i}}}^{N^{n}}f_{\ast }\frac{\partial }{\partial x_{i}}%
)=\nabla _{\left( \psi _{t}^{w}\right) _{\ast }\frac{\partial }{\partial
x_{i}}}^{N^{n}}\left( \psi _{t}^{w}\right) _{\ast }\frac{\partial }{\partial
x_{i}}
\end{equation*}%
where $\overset{\_}{\nabla }^{N^{n}}$ is the connection on the manifold $%
(N^{n},e^{2\sigma }h)$ and $\psi ^{w}=\phi _{t}^{w}of.$ From the conformal
change of connections formula, we obtain 
\begin{equation*}
\left\langle H_{\psi (x)}^{\psi (M^{m})},w(\psi (x))\right\rangle
=\sum_{i=1}^{m}\left( \phi _{t}^{w}\right) ^{\ast }\left\langle \overset{\_}{%
\nabla }_{f_{\ast }\frac{\partial }{\partial x_{i}}}^{N^{n}}f_{\ast }\frac{%
\partial }{\partial x_{i}},w^{\bot }(f(x))\right\rangle
\end{equation*}%
\begin{equation*}
=\sum_{i=1}^{m}\left( \phi _{t}^{w}\right) ^{\ast }\left\langle \nabla
_{f_{\ast }\frac{\partial }{\partial x_{i}}}^{N^{n}}f_{\ast }\frac{\partial 
}{\partial x_{i}}+2\left\langle f_{\ast }\frac{\partial }{\partial x_{i}}%
,grad(\sigma )of\right\rangle f_{\ast }\frac{\partial }{\partial x_{i}}%
\right.
\end{equation*}%
\begin{equation*}
\left. -\left\langle f_{\ast }\frac{\partial }{\partial x_{i}},f_{\ast }%
\frac{\partial }{\partial x_{i}}\right\rangle grad(\sigma )of,w^{\bot
}of\right\rangle
\end{equation*}%
and since $\left\langle f_{\ast }\frac{\partial }{\partial x_{i}},w^{\bot
}of\right\rangle =0$, we obtain 
\begin{equation*}
\left\langle H_{\psi (x)}^{\psi (M^{m})},w(\psi (x))\right\rangle =\left(
\phi _{t}^{w}\right) ^{\ast }\left\langle H_{f(x)}^{f(M^{m})}-mgrad(\sigma
)of,wof\right\rangle
\end{equation*}%
\begin{equation*}
=-m\left( \phi _{t}^{w}\right) ^{\ast }\left\langle grad(\sigma
)of,wof\right\rangle \text{, since }f\text{ is minimal.}
\end{equation*}%
Consequently 
\begin{equation*}
\frac{d}{dt}\mid _{t=t_{o}}V(f_{t})=m\int_{M^{m}}e^{2\sigma
of(x)}\left\langle grad(\sigma )of,wof\right\rangle dv_{g};\;\;\;\;t_{o}>0
\end{equation*}%
and taking into account the relation (\ref{13'})%
\begin{equation*}
\frac{d}{dt}\mid _{t=t_{o}}V(f_{t})=\frac{m\lambda }{n}\int_{M^{m}}e^{2%
\sigma of(x)}\left( u(\phi _{2t_{0}}^{w}(f(x))-u(\phi
_{t_{0}}^{w}f(x)\right) dv_{g}\text{.}
\end{equation*}%
Since the vector field $w$ is minus the gradient of the eigenfunction $u_{j}$%
, we have 
\begin{equation*}
\frac{d}{dt}u_{j}(\phi _{t}^{w}of(x))_{t=t_{o}}=-\left\Vert w\left( \phi
_{t_{o}}^{w}(f(x))\right) \right\Vert _{h}^{2}\leq 0
\end{equation*}%
so$\;\frac{d}{dt}V(f_{t})\leq 0\;$ that is to say 
\begin{equation*}
V(f_{t})\leq V(M^{m})\text{.}
\end{equation*}
\end{proof}


\begin{thebibliography}{9}
\bibitem{1} Baird, P., Eells, J., A conservation law for harmonic maps,
Lecture Notes in Math. 894 (1981), 1-25.

\bibitem{2} Berger, M., Gauduchon, P., Mazet, E. : Le spectre d'une vari\'{e}%
t\'{e} Riemannienne. Lecture Notes Vol. 194. Berlin-Heidelberg-New-York:
Spinger 1971.

\bibitem{3} Besse, A. L., Manifolds all of whose Geodesics are Closed.
Berlin-Heidelberg-New-York: Spinger 1978.

\bibitem{4} J. Eells, L. Lemaire, A report on harmonic maps. Bull. London
Math. Soc. 10 (1978) 1-68.

\bibitem{5} El Soufi, A., Applications harmoniques, immersions minimales et
transformations conformes de la sph\`{e}re, compo.Math. 85 (1993), 281-298.

\bibitem{6} El Soufi, S. Ilias, Riemannian manifolds admitting isometric
immersion by their first eigenfunctions. Pacific J. Math,, 195 (2000) 91-99.

\bibitem{7} Karcher, H. Riemannian Comparison construction, Preprint Bonn
1987.

\bibitem{8} Leug, P.F., On the stability of harmonic maps, Lecture Notes in
Math. 949(1980),122-129.

\bibitem{9} Simons,J., Minimal varieties in Riemannian manifolds, Ann. of
Math. 88(2) (1968),62-105.
\end{thebibliography}
\end{document}